\declaretheoremstyle[bodyfont=\normalfont]{noncursive}
\declaretheorem{theorem}
\declaretheorem[numberwithin=section]{lemma}
\declaretheorem[numberlike=lemma]{proposition}
\declaretheorem[numberlike=lemma]{corollary}
\declaretheorem[style=noncursive,numberlike=lemma]{definition}
\declaretheorem[style=noncursive,numberlike=lemma]{remark}
\newcommand{\im}{\ensuremath{\mbox{\rm Im}\,}}
\newcommand{\re}{\ensuremath{\mbox{\rm Re}\,}}
\newcommand{\CC}[1]{\mathbb{C}^{#1}}
\newcommand{\RR}[1]{\mathbb{R}^{#1}}
\newcommand{\dw}{\frac{\partial}{\partial w}}
\newcommand{\dz}{\frac{\partial}{\partial z}}
\newcommand{\lr}{\longrightarrow}
\numberwithin{equation}{section}
\newcommand{\hol}[1]{\mathfrak{hol}^{#1}}
\newcommand{\aut}[1]{\mathfrak{aut}^{#1}}
\title[New extension phenomena]{New extension phenomena for solutions of tangential Cauchy\,-\,Riemann Equations}
\author {I. Kossovskiy}
\address{Department of Mathematics, University of Vienna}
\email{ilya.kossovskiy@univie.ac.at}
\author {B. Lamel}
\address{Department of Mathematics, University of Vienna}
\email{bernhard.lamel@univie.ac.at}
\begin{document}

\maketitle

\date{\today}

\begin{abstract}
In our recent work \cite{nonanalytic} we showed that $C^\infty$ CR-diffeomorphisms of real-analytic Levi-nonflat hypersurfaces in $\CC{2}$ are not analytic in general. This result raised again the question on the nature of CR-maps of real-analytic hypersurfaces. 

In this paper, we give a complete picture of what CR-maps actually are. First, we discover an analytic continuation phenomenon for CR-diffeomorphisms which we call the {\em sectorial analyticity property}. It appears to be the optimal regularity property for CR-diffeomorphisms in general. We emphasize that such type of extension never appeared previously in the literature. Second, we introduce the  class of {\em Fuchsian type hypersurfaces} and prove that (infinitesimal generators of) CR-automorphisms of a Fuchsian type hypersurface are still analytic. In particular, this solves a problem formulated in \cite{nonminimalODE}. 

Finally, we prove a regularity result for {\em formal} CR-automorphisms of Fuchsian type hypersurfaces.
\end{abstract}

\tableofcontents

\section{Introduction}

\subsection{Overview}

The problem of regularity of CR-maps between CR-submanifolds in complex space is of fundamental importance in the field of Several Complex Variables. Starting from the classical work of Cartan \cite{cartan},  Chern and Moser \cite{chern}, Pinchuk \cite{pinchuksib}, and Lewy \cite{lewy}, a large amount of publications is dedicated to various positive results in this well developed  direction. In particular, when both the source and the target are real-analytic, the expected regularity of smooth CR-maps is $C^{\omega}$, i.e., they are {\em analytic} (this property implies that the CR-maps extend holomorphically to a neighborhood of the source manifold).  We refer the reader to Jacobowitz \cite{jacobowitz},  Baouendi, Ebenfelt and Rothschild \cite{ber}, Forstneri\'c \cite{forstneric},  Berhano, Cordaro and Hounie \cite{cordaro}, and the Introduction in \cite{nonanalytic} for the set-up of the theory of CR-maps, a historic outline of the analyticity problem, its connections with the boundary regularity of holomorphic maps\,/\, the reflection principle, and the connections of the problem to  the theory of Linear PDEs.  

However,  in the recent paper \cite{nonanalytic} the authors discovered the existence of real-analytic hypersurfaces in $\CC{N},\,N\geq 2$ which are $C^\infty$ CR-equivalent, but are inequivalent analytically. In particular, it follows that $C^\infty$ CR-diffeomorphisms between real-analytic Levi-nonflat hypersurfaces in $\CC{2}$ are {\em not} analytic in general. Moreover, it shows that the equivalence problem for nonminimal real-analytic CR-structures is of a more {\em intrinsic} nature, as a map realizing an equivalence does not necessarily arise from the biholomorphic equivalence of the CR-manifolds as submanifolds in complex space.

Two natural questions are immediately raised by the results in \cite{nonanalytic}. First, we shall ask what the actual nature of CR-maps is (that is, we search for an optimal property of smooth CR-diffeomorphisms replacing the analyticity). Second, we shall identify an optimal class of ``regular'' real-analytic hypersurfaces, for which CR-diffeomorphism are still analytic. 

The goal of the current paper is to answer the above two questions in the $\CC{2}$-case  in large generality. 

For the first question, we discover the phenomenon of {\em sectorial analyticity} of CR-diffeomorphisms, that is, their holomorphic extension to so-called {\em tangential sectorial domains} with an asymptotic power series representation in such a domain. We shall emphasize that this gives a very new type of extension for solutions of tangential Cauchy-Riemann equations, which never appeared previously in the literature. This phenomenon somehow parallels the very familiar phenomenon of {\em wedge extendability} for CR-functions (see Baracco, Zampieri and Zaitsev \cite{bzz} for most recent results in this direction).  

Comparing with the construction in \cite{nonanalytic}, we see that the sectorial analyticity property can not be strengthened further in general. Thus, {\em we obtain a complete picture of what CR-maps actually are}. 

For the second question, we introduce the class of {\em Fuchsian type hypersurfaces} (the latter condition is described explicitly in terms of the defining function of a hypersurface), for which CR-diffeomorphisms are still analytic. We also show optimality of the Fuchsian type condition. By that, we obtain a solution to the problem formulated in the paper \cite{nonminimalODE} of Shafikov and the first author. 

Another result of us is connected to the problem of convergence of {\em formal} CR-maps. Similarly to the analyticity issue, this problem has attracted a lot of attention of experts in Complex Analysis in the last few decades (see, e.g., the survey \cite{mirapprox} of Mir). \autoref{convergence} below establishes a convergence result for formal CR-maps in the Fuchsian type case.

\smallskip

We formulate the results below in detail. We start with describing the precise class of hypersurfaces considered in this paper.

\smallskip 

Let $M,M'\subset\CC{2}$ be real-analytic Levi-nonflat hypersurfaces passing through the origin. We recall that due to the work of many authors the problem of analyticity of CR-maps between such hypersurfaces   is well understood when both the source and the target are  {\em minimal} in the sense of Tumanov \cite{tumanov}\,/\,{\em finite type}  in the sense of D'Angelo \cite{dangelo} at the origin (both properties in the $\CC{2}$\,-\,case amount to the non-existence of a non-trivial complex curve $X\subset M$ though the origin).  The analyticity is also established in the {\em algebraic} case due to Baouendi, Huang  and Rothschild \cite{bhr}. However,
  the problem of propagating the analyticity phenomenon to the class of {\em nonminimal} hypersurfaces (in the non-algebraic case) remained widely open. The nonminimal case is particularly difficult for the study of regularity of CR-mappings, and very little is known in this setting. The state of the art and open problems here are probably best addressed in the survey \cite{hu2} of Huang. 
  
  Accordingly, we are concerned here with the nonminimal (infinite type) case.   The (locally unique!) complex curve, contained in $M$ and passing through the origin, is denoted by $X$ in what follows. Recall that $X$ is necessarily smooth. We also use, {\em locally near $p$}, the decomposition 
\begin{equation}\label{decompose}
\Sigma=X\cup\Sigma_1
\end{equation}
for the Levi degeneracy set $\Sigma$ of $M$  (here $\Sigma_1$ is a component of the real-analytic set $\Sigma$, not containing nontrivial complex germs). The fact that  $(M\setminus X)\cap U$ is Levi-nondegenerate for some neighborhood $U$ of $p$ in $\CC{2}$ reads then as $\Sigma_1=\emptyset$. 

\smallskip

 Throughout this paper, we restrict the considerations to {\em CR-automorphisms} having an infinitesimal generator (i.e., we stay within the category of flows of infinitesimal CR-automorphisms). 
 We do this restriction in order to give a simpler exposition of the new phenomena discovered in the paper, and to emphasize the elegance of the method used here.  
 
 We recall   that the {\em infinitesimal
automorphism  algebra }  for a
real submanifold $M\subset\CC{N}$ at a point $p\in M$ \rm is the
algebra $\hol{k} (M,0)$
of holomorphic ($k=\omega$)  or smooth ($k=\infty$) vector fields
\[X=f_1\frac{\partial}{\partial z_1}+...+f_n\frac{\partial}{\partial z_N},\] defined near $p$
such that each $f_j$ is a real-analytic ($k=\omega$) or smooth ($k=\infty$)
CR-function on $M$ and
 $X+\bar X$ is tangent to $M$ near $p$. Vector fields $X\in\mathfrak{hol}\,(M,0)$ (resp.
$X\in\mathfrak{hol}^\infty(M,0)$) are exactly the vector fields
generating flows of holomorphic (resp. smooth CR)
transformations, preserving $M$ locally. The  \emph{stability
subalgebras}
$\aut{k}(M,0)\subset\hol{k} (M,0)$ are
 determined by the condition $X|_p=0$.

\subsection{The sectorial extension}

Let $M\subset\CC{2}$ be a real-analytic  Levi nonflat hypersurface, which is  nonminimal at a point $p\in M$, and $X\ni p$ its complex locus.  We recall that (e.g. \cite{lmblowups}), for any real-analytic curve $\gamma\subset M$ passing trough $p$ and transverse to the complex tangent $T^{\CC{}}_p M$, we can  choose local holomorphic coordinates near $p$ such that in these coordinates $p$ is the origin, the complex locus is given by
\begin{equation}\label{locus}
X=\{w=0\},
\end{equation} 
and the curve $\gamma$ is described as
\begin{equation}\label{Gamma}
\Gamma=\bigl\{z=0,\im w=0\bigr\}.
\end{equation} 
We next introduce
\begin{definition}\label{tangential} A set $D_p\subset\CC{2},\,D_p\ni p$ is called a \em tangential sectorial domain for $M$ at $p$  \rm if, in some   local holomorphic coordinates   $(z,w)$ for $M$ as above, the set $D_p$ looks as 
\begin{equation}\label{standard}
\Delta\times \Bigl(S^+\cup\{0\}\cup S^-\Bigr).
\end{equation}
Here $\Delta\subset\CC{}$ is a disc of radius $r>0$, centered at the origin, and $S^\pm\subset\CC{}$ are sectors
\begin{equation}\label{sectors}
S^+=\bigl\{|w|<R,\, -\alpha<\mbox{arg}\,w<\alpha\bigr\},\quad S^-=\bigl\{|w|<R,\, \pi-\alpha<\mbox{arg}\,w<\pi+\alpha\bigr\}
\end{equation}
\noindent for appropriate $R>0,\,0<\alpha<\pi/2$ and $\mbox{arg}\,w\in(-\pi/2,3\pi/2]$. 
We also denote by $D^\pm_p$ the domains $\Delta\times S^\pm\subset\CC{2}$ respectively.
\end{definition} 
We further note that  in the above local holomorphic coordinates the hypersurface $M$ is given by an equation
$$\im w=(\re w)^m\psi(z,\bar z,\re w),\quad m\geq 1,\quad \psi(0,0,\re w)=0,$$
which implies that 

\medskip

{\em for any tangential sectorial domain $D_p$ for $M$ at $p$,  the intersection of $M$ with a sufficiently small neighborhood $U_p$ of $p$ in $\CC{2}$ is contained in $D_p$}.

   \medskip
   
 \noindent  We now give the following   
\begin{definition}\label{asymptotic}
 We say that 
 a $C^\infty$ CR-function $f$ in a neighborhood of $p$ in $M$ is {\em sectorially extendable,} if for some (and then any sufficiently small) tangential sectorial domain $D_p$ for $M$ at $p$, there exist functions $f^\pm\in\mathcal O(D^\pm_p)$ such that 
 
 \smallskip
 
 $(i)$ each $f^\pm$ coincides with $f$ on $D^\pm_p\cap M$, and 
 
 \smallskip
 
$ (ii)$ both $f^\pm$ admit the same {\em asymptotic representation}
 $$ f^\pm\sim \sum_{k,l\geq 0}a_{kl}z^kw^l$$
 in the respective domains $D^\pm_p$. Recall that the latter property means that for all $N\geq 0$ we have 
 $$f^\pm(z,w)-\sum_{k+l=N}a_{kl}z^kw^l=o\left(|z|^N+|w|^N\right),\,\,\mbox{when}\,\,\, (z,w)\in D_p^\pm,\,(z,w)\rightarrow (0,0);$$
  see also \cite{vazow} for more details of this concept. 
  
  \end{definition}
 
 \smallskip

 We can similarly define the sectorial extendability of CR-mappings or infinitesimal CR-automorphisms of real-analytic hypersurfaces. We show in Section 3 that the notions of a tangential sectorial domain and the sectorial extendability, respectively, are in a sense coordinates-free. We also show that the property of ``being small'' in Definition 1.2 applies to the radii of the disc and the sector only, but not to the angle of the sector.

 \smallskip

We now formulate our sectorial extension result. Recall that by $\Sigma_1$ we denote the additional component of the Levi degeneracy set $\Sigma$ (if the latter exists), as in \eqref{decompose}.

\begin{theorem}\label{sectorial}
Let $M\subset\CC{2}$ be a real-analytic Levi-nonflat hypersurface, which is nonminimal at a point $p$. 
Assume, in addition, that $M$ is Levi-nondegenerate in the complement of its complex locus, i.e. $\Sigma_1=\emptyset$. Then any infinitesimal automorphism $L=P\dz+Q\dw\in \hol{\infty}(M,p)$ is  sectorially extendable.  
\end{theorem}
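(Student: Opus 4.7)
The strategy is to combine the analyticity theory on the Levi-nondegenerate part of $M$ with the reduction, familiar from \cite{nonminimalODE}, of non-minimal Levi-nonflat surfaces in $\CC{2}$ to singular second-order complex ODEs, and then apply the classical Gevrey / multisummability theory for formal solutions at an irregular singular point.

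\emph{Extension off the complex locus.} In coordinates in which $X=\{w=0\}$ and
$M$ is $\im w=(\re w)^m\psi(z,\bar z,\re w)$ with $\psi(0,0,\cdot)\equiv 0$,
the tangency condition $(L+\bar L)|_M=0$ for $L=P\dz+Q\dw$ becomes an explicit real-analytic system for the smooth CR coefficients $P$ and $Q$. Off $X$ the hypersurface $M$ is Levi-nondegenerate -- this is precisely the hypothesis $\Sigma_1=\emptyset$ -- so any smooth infinitesimal CR-automorphism there is automatically real-analytic by the classical analyticity theorem for Levi-nondegenerate real-analytic hypersurfaces. Consequently $P$ and $Q$ are real-analytic on $(M\cap D_p^\pm)\setminus X$, and by the Hans Lewy extension they propagate to holomorphic functions on one-sided neighborhoods of $M\setminus X$ in $D_p^\pm$. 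A monodromy argument based on the simple connectedness of $D_p^\pm$ and the uniqueness of propagation along Segre varieties then produces candidates $f^\pm\in\mathcal{O}(D_p^\pm)$ extending $P$ and $Q$ (after possibly shrinking $r$ and $R$).

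\emph{Common asymptotic expansion via the associated singular ODE.} Since $P,Q$ are $C^\infty$-CR at $p$, they have formal Taylor series $\hat P,\hat Q\in\CC{}[[z,w]]$. Complexifying the defining equation and realizing the Segre family of $M$ as the integral family of a second-order complex ODE $w''=\Phi(z,w,w')$ with an irregular singularity at $\{w=0\}$ whose Poincar\'e rank is governed by the weight $m$, the condition that $L$ be an infinitesimal symmetry becomes a \emph{linear} variational system with the same singular locus, of which $\hat P,\hat Q$ are formal solutions. Hukuhara--Turrittin theory together with Ramis--Sibuya summation imply that such formal solutions are Gevrey of an order dictated by $m$ and sum to genuine holomorphic functions on any sector avoiding the finitely many Stokes rays, admitting the formal series as Poincar\'e asymptotic expansion. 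The sectors $S^\pm$ of Definition~\ref{tangential} are constructed precisely to bisect the two non-Stokes directions $\arg w=0,\pi$ along which $M$ approaches $X$. Uniqueness of summable lifts with prescribed asymptotic expansion forces these Borel sums to coincide with the holomorphic extensions $f^\pm$ of the previous step; this simultaneously yields the holomorphic extensions required in $(i)$ and the common asymptotic representation required in $(ii)$ of Definition~\ref{asymptotic}.

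\emph{Main obstacle.} The most delicate point is the middle step: one needs both to translate the tangency equations for $L$ into a singular \emph{linear} variational system of the correct irregular rank, and to verify that the abstract formal Taylor series $\hat P,\hat Q$ of the smooth CR coefficients really coincide with the Poincar\'e asymptotic expansion of $f^\pm$ at the origin. The latter is nontrivial since a smooth CR function at a nonminimal point need not a priori have its Taylor series realized as an asymptotic expansion of any specific holomorphic lift. The hypothesis $\Sigma_1=\emptyset$ enters crucially here by ruling out additional Stokes phenomena that would otherwise prevent $f^+$ and $f^-$ from sharing a single formal expansion.
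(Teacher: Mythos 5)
Your plan correctly identifies the main ingredients (analyticity off $X$, the associated singular ODE \eqref{ODE}, a linear system for the symmetry coefficients, sectorial asymptotic theory), but it has a genuine gap precisely at the point you yourself flag as ``most delicate,'' and your proposed resolution of that point is circular. Multisummability / Ramis--Sibuya theory tells you that the formal solution $(\hat P,\hat Q)$ of the linear variational system can be summed to \emph{some} actual solutions in suitable sectors admitting the formal series as asymptotic expansion; it does not tell you that the particular solution arising from your smooth CR vector field is one of them. A general solution of such a system in a sector has the form $\sum_j c_j F^j(w)w^{\alpha_j}e^{t_j(1/w)}$, and a priori the actual coefficients could contain components with $e^{t_j(1/w)}$ purely oscillatory (or growing) on $\RR{+}$, which destroy the asymptotic expansion while being invisible to any ``uniqueness of summable lifts with prescribed asymptotics'' argument --- to invoke such uniqueness you must already know your lift has the prescribed asymptotics, which is exactly what is to be proved. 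The paper closes this gap by a direct two-step argument (Proposition~\ref{solutions}): using the $C^\infty$ smoothness of the coefficients on the real ray up to $w=0$, one first shows that any surviving exponential must satisfy $\re t_j(1/w)\equiv 0$ on $\RR{}$, and then kills the oscillatory terms by differentiating the representation sufficiently many times, since each differentiation amplifies the oscillatory term by a factor of order $w^{-s}$ and eventually contradicts the existence of limits of all derivatives. Nothing in your proposal substitutes for this. Relatedly, the hypothesis $\Sigma_1=\emptyset$ has nothing to do with ruling out Stokes phenomena; it is what guarantees the existence of the $m$-admissible coordinates \eqref{madmissible} and of the associated singular ODE on all of $U\setminus X$.

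A second, smaller gap: your first step produces extensions only to one-sided neighborhoods of $M\setminus X$, which is far from the full product domain $\Delta\times S^\pm$ required by Definition~\ref{tangential}; simple connectedness alone does not bridge this. The paper obtains the extension to $\Delta\times S^\pm$ from a structural reduction you omit: expanding the tangency of $L^{(2)}$ with \eqref{ODE} in powers of $w_1$ forces $Q_{zz}=0$, hence $Q=Q_0(w)+Q_1(w)z$ and $P=P_0(w)+P_1(w)z+Q_1'(w)z^2-2Q_1(w)\tilde a(z,w)$, so that everything reduces to a meromorphic linear ODE system \eqref{systeminw} in the single variable $w$ for the coefficient functions and their derivatives; sectorial extension of these scalar functions then yields the extension to all of $\Delta\times S^\pm$ for free. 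Without this reduction your ``linear variational system'' is a singular PDE system in two variables, and the one-variable irregular-singularity machinery you invoke does not directly apply to it.
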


The assertion of \autoref{sectorial} can be immediately reformulated for the class of CR-automorphisms which have an infinitesimal generator $L$.

\subsection{Analyticity in the Fuchsian type case} As was discussed above, we introduce now the class of {\em Fuchsian type} hypersurfaces for which CR-diffeomorphisms demonstrate regular behaviour. To formulate the Fuchsian condition, we need to use special coordinates \eqref{madmissible} (called {\em $m$-admissible coordinates}; we refer the reader to Section 3 for details) which exist only in the case when $M\setminus X$ is Levi-nondegenerate, i.e., when $\Sigma_1=\emptyset$ in \eqref{decompose}. Such a transfer is possible due to the work \cite{nonminimalODE} of Shafikov and the first author. In order to deal with the general case, we need to use the resolution-of-degeneracies construction introduced in \cite{lmblowups} by Mir and the second author.

\begin{definition} 
Let $M\subset\CC{2}$ be a real-analytic Levi-nonflat hypersurface, $p\in M$. We say that a  real-analytic hypersurface $M\rq{}\subset\CC{2}_{(\xi,\eta)}$ containing the germ at the origin of the complex line $X=\{\eta=0\}$ is {\em a monomial blow-up of $M$ at $p$}, if for some local holomorphic coordinates near $p$ at which $p$ is the origin there exists a blow-down map of the form 
\begin{equation}\label{blowup}
F(\xi,\eta):\quad (\CC{2},0)\lr (\CC{2},0),\quad F(\xi,\eta)=(\xi\eta^s,\eta^l),\quad s,l\in\mathbb{Z},\quad s,r\geq 1,
\end{equation}
such that $F(M\rq{})\subset M$. One necessarily has  $F(X)=\{0\}$.
\end{definition}

As the result in \cite{lmblowups} shows, {\em any} Levi-nonflat hypersurface admits a monomial blow-up with $s\geq 2,\,l=2$ in appropriate local holomorphic coordinates, which has the property $\Sigma_1=\emptyset$ in \eqref{decompose} (see Section 2 for details). 

We are now in the position to formulate the Fuchsian type condition.

\begin{definition} \label{Fuchsian}
Let $M\subset\CC{2}$ be a real-analytic Levi-nonflat hypersurface, containing a complex hypersurface $X$. Assume first that $\Sigma_1=\emptyset$. We then say that $M$ is {\em of Fuchsian type at $q$}, if in some local holomorphic coordinates near $q$ at which  $q$ is the origin and $M$ is given by \eqref{madmissiblereal}, the functions $h_{kl}$, as in \eqref{madmissiblereal}, satisfy
\begin{equation}\label{fuchsianity}
\begin{aligned}
\mbox{ord}_0\,h_{22}(u)&\geq m-1,\quad \mbox{ord}_0\, h_{23}(u)\geq 2m-2,\quad \mbox{ord}_0\, h_{33}(u)\geq 2m-2,\\ 
\mbox{ord}_0\,
h_{24}(u)&\geq 3m-3, \quad \mbox{ord}_0\, h_{34}(u)\geq 3m-3.
\end{aligned}
\end{equation}
If, otherwise, $\Sigma_1\neq\emptyset$, we say that $M$ is {\em Fuchsian} at $q$ if there exists a monimial blow-up of it with $\Sigma_1=\emptyset$ which is Fuchsian at the origin.
\end{definition}  We then have an important

\begin{remark}\label{mequals1}
We emphasize that the Fuchsian condition holds automatically for $m=1$ and does {\em not} hold in general for $m>1$ (see, e.g., examples provided in \cite{nonminimalODE}).
\end{remark}

\begin{remark}\label{comparison}
The Fuchsian condition here should be compared with the one introduced by Shafikov and the first author in \cite{nonminimalODE} for the special class of nonminimal hypersurfaces, {\em Levi-nondegenerate and spherical in $M\setminus X$}. We emphasize that our Fuchsian condition is slightly stronger than the one  introduced in \cite{nonminimalODE}. In particular, it includes conditions on the functions $h_{24},h_{34}$ which are not apparent in  \cite{nonminimalODE} due to the { sphericity} of hypersurfaces there. Following the proof in Section 4, the reader can see that in the general (non-spherical) case the latter conditions on  $h_{24},h_{34}$ can {\em not} be avoided.
\end{remark}

We now formulate our analyticity result.

\begin{theorem}\label{analyticity}
Let $M\subset\CC{2}$ be a real-analytic Levi-nonflat hypersurface, which is nonminimal at a point $p$. Suppose $p$ is a Fuchsian type point. Then any vector field $L\in\hol{\infty}(M,p)$  extends holomorphically to a neighborhood of $p$.
\end{theorem}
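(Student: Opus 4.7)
The plan is to upgrade the sectorial extension of \autoref{sectorial} to a genuine analytic extension by exploiting a Fuchsian linear ODE structure underlying the invariance equation for $L$. By \autoref{sectorial}, the field $L = P\,\dz + Q\,\dw$ already extends holomorphically to tangential sectorial domains $D_p^\pm$ with a common asymptotic power series $\hat L(z,w) = \sum_{k,l\ge 0} a_{kl} z^k w^l$, so it will suffice to prove that $\hat L$ converges: uniqueness of asymptotic expansions then identifies its sum with $L^\pm$ on each sector, producing a single holomorphic extension of $L$ across $p$.

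First, I would reduce to the case $\Sigma_1 = \emptyset$. If $\Sigma_1 \ne \emptyset$, the Fuchsian hypothesis furnishes, by \autoref{Fuchsian}, a monomial blow-up $M'$ with $\Sigma_1 = \emptyset$ that is Fuchsian at the origin, together with a blow-down $F$ of the form \eqref{blowup}. Pulling $L$ back through $F$ yields an infinitesimal CR-automorphism $L'$ of $M'$ off the exceptional divisor; once $L'$ is shown to be holomorphic near $0$, pushing it forward along the branched covering $F$ produces a holomorphic vector field on a punctured neighborhood of $p$, which extends across the isolated point $p$ by Hartogs' removable-singularity theorem in $\CC{2}$.

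In the case $\Sigma_1 = \emptyset$, I would work in $m$-admissible coordinates so that $M$ takes the normal form involving the functions $h_{kl}$ of \autoref{Fuchsian}. Writing $P = \sum_k P_k(w)\, z^k$ and $Q = \sum_k Q_k(w)\, z^k$ and plugging into the tangency condition $(L + \bar L)\rho \equiv 0$ on $M$, then complexifying (using that $P,Q$ are CR on $M$) and matching coefficients of $z^k \bar z^l$, one extracts a coupled linear ODE system in $w$ satisfied by a finite vector $Y(w)$ built from the lowest-order $P_k, Q_k$; the higher-order coefficients are then determined by $Y$ via algebraic and differential recursions. The three lines of order conditions in \eqref{fuchsianity} are calibrated precisely so that this system takes the Fuchsian form
\begin{equation*}
w\, Y'(w) = B(w)\, Y(w), \qquad B \text{ holomorphic at } w = 0,
\end{equation*}
i.e., $w=0$ is a regular singular point of the system in the classical Frobenius sense.

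To conclude, $\hat L$ produces a formal power series solution $\hat Y(w)$ of the above system, and the classical theory of Fuchsian linear systems guarantees that any formal power series solution (free of fractional powers and logarithms, as $\hat Y$ is) converges, with radius of convergence at least the distance from $0$ to the next pole of $B$. Hence $\hat L$ converges and defines a holomorphic $\tilde L$ near $p$, which must agree with $L^\pm$ on $D_p^\pm$ by asymptotic uniqueness. The principal obstacle is the verification that \eqref{fuchsianity} yields a genuinely Fuchsian rather than irregular singularity: each of the orders $m-1$, $2m-2$, $3m-3$ must be shown to produce at most a first-order pole of $B$, and integer resonances among the eigenvalues of $B(0)$ must be analyzed so as to exclude any logarithmic contribution to $\hat Y$. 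The sharpness of this analysis is exactly what is reflected in \autoref{mequals1} and \autoref{comparison}.
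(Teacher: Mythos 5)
Your strategy for the case $\Sigma_1=\emptyset$ --- reduce the coefficients of $L$ to a finite vector $Y(w)$ solving a linear system with a regular singularity at $w=0$, then combine the sectorial asymptotics with convergence of formal solutions of Fuchsian systems --- is indeed the paper's strategy, but the step you yourself flag as ``the principal obstacle'' is the actual content of the proof, and the ideas that make it work are absent from the proposal. The finiteness of $Y$ comes from the tangency of the second prolongation $L^{(2)}$ with the associated singular ODE \eqref{ODE}: polynomiality in the jet variable $w_1$ yields the four equations \eqref{initial}, which force $Q=Q_0(w)+Q_1(w)z$ and express $P$ through $P_0,P_1,Q_1$. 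More importantly, the resulting system \eqref{bigsystem} for $(P_0,P_1,Q_0,Q_1)$ is \emph{not} Fuchsian even under \eqref{fuchsianity}: it carries poles of order $2$ and $3$, so your plan as stated stalls at an irregular singularity. The missing ingredient is \autoref{factorw}: since $X$ is a CR-orbit it is invariant under the flow of $L$, hence $Q=wR$ with $R$ smooth, and only after this substitution does the system \eqref{bigsystem1} in the variables $(P_0,P_1,R_0,R_1,wP_0',wP_1',wR_0',wR_1')$ acquire a first-order pole. Also, no resonance analysis is needed: the asymptotic series of $Y$ in the sectors exists a priori, is automatically a formal power series solution, and every such solution of a Fuchsian system converges; the identification with $Y$ itself follows because a flat solution of a Fuchsian system vanishes --- not from ``uniqueness of asymptotic expansions'', which is false for general functions.

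The reduction of the case $\Sigma_1\neq\emptyset$ is also flawed. The pullback $(F^{-1})^*L$ is not an infinitesimal automorphism near the exceptional divisor: by \eqref{newfield} it has poles along $\{\eta=0\}$, so the Fuchsian argument must be rerun for meromorphic fields of the form $\frac{f}{w^n}\dz+\frac{g}{w^n}\dw$. More seriously, the descent cannot be done by a removable-singularity theorem: $F(\xi,\eta)=(\xi\eta^s,\eta^2)$ maps a neighborhood of the origin onto a horn-shaped region $\bigl\{|z|\lesssim |w|^{s/2}\bigr\}$, so the pushforward is not defined on a punctured neighborhood of $p$; and even where it is defined the singular locus is the codimension-one set $X=\{w=0\}$, across which Hartogs does not apply and Riemann extension would require boundedness you have not established. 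The paper instead expands $P(\xi\eta^s,\eta^2)=\sum_j f_j(\eta)\xi^j$ with $f_j$ holomorphic, uses the smoothness of $L$ on $M$ to show each $f_j$ is divisible by $\eta^{js}$ and even in $\eta$, and reconstructs $P(z,w)=\sum_j\bigl(f_j(\eta)/\eta^{js}\bigr)z^j$ directly as a convergent series downstairs; an argument of that kind is required here.
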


We shall note here that for $m=1$ the analyticity property for CR-diffeomorphisms was already established in the work of Ebenfelt \cite{ebenfelt}. Thus, in view of  \autoref{mequals1}, \autoref{analyticity} extends the result of Ebenfelt for $m>1$. We shall also note that for non-Fuchsian hypersurfaces the assertion of \autoref{analyticity} does {\em not} hold in general, as the construction in \cite{nonanalytic} shows. Thus, the result in \autoref{analyticity} is in a sense optimal.

  Applying the Hanges-Treves propagation principle \cite{ht}, we immediately obtain 

\begin{corollary}\label{anypoint}
Let $M\subset\CC{2}$ be a real-analytic Levi-nonflat hypersurface, which is nonminimal at a point $p$, and $X\ni p$ the complex hypersurface, contained in $M$. Assume that $L\in\hol{\infty}(M,p)$ is defined at some Fuchsian type point $q\in X$. Then $L$ extends holomorphically to a neighborhood of $p$.
\end{corollary}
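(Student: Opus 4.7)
The plan for \autoref{anypoint} is to derive it directly from \autoref{analyticity} together with the Hanges--Treves propagation of analytic singularities \cite{ht}. First I would invoke \autoref{analyticity} at the Fuchsian type point $q\in X$: since $L=P\dz+Q\dw$ belongs to $\hol{\infty}(M,q)$ by assumption, that theorem produces a holomorphic extension of $L$ to an open neighborhood of $q$ in $\CC{2}$; equivalently, the $C^\infty$ CR-function coefficients $P$ and $Q$ of $L$ have empty analytic wavefront set at $q$.

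Next I would propagate this analytic regularity along the complex locus $X$. The Hanges--Treves theorem asserts that, for a $C^\infty$ CR-function on a real-analytic CR-manifold, the analytic wavefront set is invariant under the bicharacteristic flow of the tangential Cauchy--Riemann operator. For a Levi-nonflat real-analytic hypersurface $M\subset\CC{2}$, a complex curve contained in $M$ is precisely a bicharacteristic leaf of this system, and in our setting this leaf is exactly $X$. Since $p$ and $q$ both lie on the connected smooth curve $X$, and $L$ is by assumption defined on an open subset of $M$ which we may take to contain the arc of $X$ joining $q$ to $p$, the microlocal analyticity of $P$ and $Q$ will transport from $q$ to $p$. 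This yields the desired holomorphic extension of $L$ to a $\CC{2}$-neighborhood of $p$.

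The one potentially delicate point will be verifying that $X$ indeed appears as a bicharacteristic leaf of the tangential CR-system on $M$, and that in the present Levi-nondegenerate-off-$X$ situation emptiness of the analytic wavefront set at a point of $X$ translates into holomorphic extension to a two-sided $\CC{2}$-neighborhood rather than merely along $X$. Both points are standard in the analysis of nonminimal Levi-nonflat hypersurfaces in $\CC{2}$ (and are in fact a direct consequence of the one-sided version of Hanges--Treves combined with the edge-of-the-wedge theorem applied at points of $X$ where $M\setminus X$ is Levi-nondegenerate on both sides), so once they are recorded the corollary will follow with no further work beyond what \autoref{analyticity} already provides.
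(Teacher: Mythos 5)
Your proposal is correct and follows essentially the same route as the paper: the authors likewise deduce \autoref{anypoint} by applying \autoref{analyticity} at the Fuchsian point $q$ and then invoking the Hanges--Treves propagation principle \cite{ht} to carry the holomorphic extendability of the coefficients of $L$ along the complex curve $X$ from $q$ to $p$. The only remark is that the cited Hanges--Treves result already gives propagation of full holomorphic extendability along complex curves contained in $M$, so your extra step via one-sided extension and edge-of-the-wedge is not needed.
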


We also formulate below a curious intermediate result on the analytic continuation of infinitesimal CR-automorhisms, defined {\em in the complement} of the complex locus $X$.

\begin{theorem}\label{extension}
Let $M\subset\CC{2}$ be a real-analytic Levi-nonflat hypersurface, which is nonminimal at a point $p$. Let $U$ be a sufficiently small neighborhood of $p$ in $\CC{2}$, and $X\ni p$ the complex hypersurface, contained in $M$. 

\smallskip

(i) Suppose that $(M\setminus X)\cap U$ is Levi-nondegenerate. Then, for any $q\in (M\setminus X)\cap U$ and  any infinitesimal automorphism $L$ of $M$ at $q$, the germ of $L$ at $q$  extends analytically along any path $\gamma\subset U\setminus X$, starting at $q$.

\smallskip 

(ii) Suppose, otherwise, that $(M\setminus X)\cap U$ contains Levi-degenerate points.  Then there exists local holomorphic coordinates $(z,w)$ near $p$ at which $p$ is the origin and $X=\{w=0\}$, and a blow-down map 
\begin{equation}\label{lmblowup}
F(\xi,\eta):\quad (\CC{2},0)\lr (\CC{2},0),\quad F(\xi,\eta)=(\xi\eta^s,\eta^2),\quad s\in\mathbb{Z},\quad s\geq 2,
\end{equation}
such that for a sufficiently small neighborhood $U\rq{}$ of the origin in $\CC{2}_{(\xi,\eta)}$, any point $q\in (M\setminus X)\cap F(U\rq{})$, and any infinitesimal automorphism $L$ of $M$ near $q$,    the pulled-back germ $(F^{-1})^*(L)$ at $q\rq{}:=F^{-1}(q)$ extends analytically along any path $\gamma\rq{}\subset U\rq{}\setminus \{\eta=0\}$, starting at $q\rq{}$, for an appropriate choice of a branch of $F^{-1}$. 
\end{theorem}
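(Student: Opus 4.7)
The plan is to reduce everything to the classical analyticity statement at Levi\,-\,nondegenerate points and then propagate this local extension along the path by a standard monodromy/continuation argument. Part (ii) will follow from Part (i) by pulling the problem back to a resolution.

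\emph{Part (i).} The starting point is the classical fact that at a Levi\,-\,nondegenerate point of a real-analytic hypersurface in $\CC{2}$, any $C^\infty$ infinitesimal CR-automorphism is automatically real-analytic (this is a consequence of the Chern\,-\,Moser theory: the algebra $\hol{\infty}$ at such a point must coincide with the finite-dimensional algebra $\hol{\omega}$ of holomorphic infinitesimal automorphisms given by the normal form). Consequently, for every $q'\in (M\setminus X)\cap U$ and every infinitesimal CR-automorphism $L$ of $M$ defined near $q'$, the coefficients $P,Q$ extend to a holomorphic vector field on a neighborhood $V_{q'}\subset\CC{2}$ of $q'$, and this extension still satisfies the tangency condition $L+\bar L$ tangent to $M$ on $M\cap V_{q'}$.

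Given a path $\gamma:[0,1]\to U\setminus X$ with $\gamma(0)=q$, I would extend the germ of $L$ along $\gamma$ by the usual sup-argument. Let $T\subset[0,1]$ be the set of times up to which the extension exists. $T$ is nonempty (it contains $0$) and open in $[0,1]$. To see it is closed, I set $t^*=\sup T$ and $p^*=\gamma(t^*)\in U\setminus X$. If $p^*\notin M$, then the extension up to $\gamma(t)$ for $t<t^*$ close to $t^*$ already lives on an open neighborhood in $\CC{2}$ containing $p^*$ (by openness of the domain of a holomorphic vector field), and we are done. If $p^*\in M\setminus X$, then by the assumption $\Sigma_1=\emptyset$ the hypersurface $M$ is Levi-nondegenerate at $p^*$; the already-constructed extension restricts on $M$ near $p^*$ to a smooth CR-vector field whose real part is tangent to $M$, hence it is a germ of an element of $\hol{\infty}(M,p^*)$, and by the local analyticity we again obtain an extension to a full neighborhood of $p^*$. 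By connectedness this gives $T=[0,1]$ and establishes (i).

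\emph{Part (ii).} When Levi-degenerate points occur in $(M\setminus X)\cap U$, I appeal to the resolution-of-degeneracies construction of Mir and the second author \cite{lmblowups} discussed in Section 2: in appropriate local holomorphic coordinates near $p$ with $X=\{w=0\}$, there is a monomial blow-down map of the form \eqref{lmblowup}, $F(\xi,\eta)=(\xi\eta^s,\eta^2)$ with $s\geq 2$, such that the strict transform $M'$ of $M$ under $F$ is Levi-nonflat with $\Sigma_1(M')=\emptyset$. Since $F$ is biholomorphic on $\{\eta\neq 0\}$, for any $q\in (M\setminus X)\cap F(U')$ and any infinitesimal CR-automorphism $L$ of $M$ near $q$, the pullback $L'=(F^{-1})^*L$ (with an appropriately chosen branch of $F^{-1}$) is a well-defined infinitesimal CR-automorphism of $M'$ at $q'=F^{-1}(q)$. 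Applying Part (i) to $(M',L',q')$ in $U'\setminus\{\eta=0\}$ yields the claimed analytic continuation along $\gamma'$.

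\emph{Main obstacle.} The delicate step is the closedness part of Step 2 at a limit point $p^*\in M\setminus X$: one must check that the holomorphic extension produced on a one-sided/punctured neighborhood of $p^*$ really defines a germ in $\hol{\infty}(M,p^*)$, i.e.\ that the real tangency relation $\Re(L\rho)=a\,\rho$ for a defining function $\rho$ survives the continuation. This is handled by the unique continuation principle for holomorphic functions (the complexified tangency identity is holomorphic and vanishes on an open subset of $M\cap V_{\gamma(t)}$), after which the classical local analyticity at $p^*$ provides the further extension. All other steps are essentially formal manipulations with the blow-up map $F$ and standard monodromy.
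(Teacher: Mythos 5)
There is a genuine gap in Part (i), and it is precisely at the step you dismiss most quickly. The path $\gamma$ in the statement is an arbitrary path in the open set $U\setminus X\subset\CC{2}$; it is not required to stay on $M$. Your continuation argument only has a mechanism for producing new extensions at points of $M\setminus X$ (via local analyticity at Levi-nondegenerate points). At a limit point $p^*=\gamma(t^*)\notin M$ you write that the extension already constructed near $\gamma(t)$ for $t<t^*$ ``already lives on an open neighborhood in $\CC{2}$ containing $p^*$ (by openness of the domain of a holomorphic vector field)''. This is vacuous: a holomorphic germ is defined on \emph{some} neighborhood of $\gamma(t)$, but nothing prevents the radii of these neighborhoods from shrinking to $0$ as $t\to t^*$, and a holomorphic function does not extend past its domain of definition for free. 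Once the path leaves $M$ there is no CR structure left to exploit, so your argument cannot close up, and the theorem as stated (continuation along \emph{any} path in $U\setminus X$, including paths far from $M$) is out of reach by this route. Even for paths staying on $M$ your closedness step has the same soft spot: you would need the germ at $\gamma(t)$ to be defined on a full neighborhood of $p^*$ in $M$ before invoking local analyticity there, which again requires controlling the size of the domains.

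The paper supplies exactly the missing mechanism. From the tangency of the second prolongation $L^{(2)}$ with the associated singular ODE \eqref{ODE} one extracts the relations \eqref{initial}, differentiates them once in $z$ and $w$, and assembles the result into a complete first-order linear system \eqref{complete} for the $12$-vector $y=(P,Q,\dots,Q_{ww})$ of the coefficients and their derivatives up to order two, with coefficient matrices $A,B$ holomorphic on all of $U\setminus X$ (meromorphic along $X$). Solutions of such an integrable linear system continue holomorphically along any path in the domain where the coefficients are holomorphic — this is \autoref{polydisc} applied to a chain of polydiscs covering $\gamma$ — and this is what yields continuation along arbitrary paths in $U\setminus X$, on or off $M$. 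Your treatment of Part (ii) (reduce to Part (i) via the blow-up of \autoref{blowuplemma}) does agree with the paper, but it inherits the gap from Part (i). To repair your proof you would need to replace the open-closed argument by (or supplement it with) the derivation of a linear holomorphic ODE/PDE system satisfied by the coefficients of $L$ throughout $U\setminus X$.
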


\autoref{extension} should be compared with the extension-along-path result of Shafikov and the first author obtained in \cite{nonminimal}.

\subsection{Regularity of formal CR-automorphisms} We recall that  convergence of formal CR-diffeomorphisms between real-analytic finite type hypersurfaces  in $\CC{2}$ holds due to the result of Baouendi, Ebenfelt and Rothschild \cite{ber1}. In the nonminimal case, the only known result is due to Juhlin and the second author \cite{jl2} (they prove convergence in the case $m=1$ in \eqref{mnonminimal}). 

Somewhat surprisingly for the field, in the paper \cite{divergence} Shafikov and the first author constructed examples of real-analytic hypersurfaces in $\CC{2}$ which are formally but not holomorphically equivalent. In particular, this implies the divergence of  formal CR-diffeomorphisms between nonminimal Levi-nonflat hypersurfaces in general. Thus the problem of distinguishing the optimal class of hypersurfaces with the convergence property came out naturally.   

The following theorem below extends the convergence result in \cite{jl2}.

\begin{theorem}\label{convergence}
Let $M\subset\CC{2}$ be a real-analytic Levi-nonflat hypersurface, which is nonminimal at a point $p$. Suppose $p$ is a Fuchsian type point. Then any formal infinitesimal automorphism $L\in\hol{f}(M,p)$  is convergent.
\end{theorem}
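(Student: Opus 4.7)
The plan is to deduce \autoref{convergence} from \autoref{analyticity} by realizing the formal infinitesimal automorphism $L$ as a smooth one. Writing $L=\hat P\,\dz+\hat Q\,\dw$ with formal power series $\hat P,\hat Q\in\CC{}[[z,w]]$, I would construct a smooth vector field $L^\infty=P\dz+Q\dw\in\hol{\infty}(M,p)$ whose Taylor expansion at $p$ coincides with $L$. Applying \autoref{analyticity} to $L^\infty$ would then show that $L^\infty$ is holomorphic near $p$, forcing $\hat P,\hat Q$ to be convergent power series; this is precisely the content of \autoref{convergence}.

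To build $L^\infty$, I would first invoke the Whitney/Borel extension theorem to obtain smooth $P_0,Q_0$ on a neighborhood of $p$ in $\CC{2}$ whose Taylor series at $p$ are $\hat P$ and $\hat Q$. For any real-analytic local defining function $r$ of $M$, the tangency defect $((P_0\dz+Q_0\dw)+\overline{(P_0\dz+Q_0\dw)})r$ restricted to $M$ is a smooth function vanishing to infinite order at $p$, because its formal counterpart vanishes by the assumption $L\in\hol{f}(M,p)$. The task reduces to finding flat smooth correction terms $\delta P,\delta Q$ so that $L^\infty=(P_0+\delta P)\dz+(Q_0+\delta Q)\dw$ lies in $\hol{\infty}(M,p)$, i.e., satisfies the exact tangency condition together with tangential Cauchy--Riemann equations on $M$.

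The core of the argument is the solvability of this flat correction problem. Using $m$-admissible coordinates (as in the setup of \autoref{analyticity}, after a monomial blow-up via \eqref{lmblowup} when $\Sigma_1\neq\emptyset$ and transferring through the blow-down $F$), complexifying along the Segre varieties of $M$, and linearizing the tangency condition at the zero solution yields a system of singular linear ODEs along the complex locus $X=\{w=0\}$. The Fuchsian bounds \eqref{fuchsianity} are exactly what forces this system to be of Fuchsian (regular-singular) type, and accordingly guarantee that the associated inhomogeneous system admits flat smooth solutions whenever the right-hand side is flat at $w=0$. This is the same structural input used in the proof of \autoref{analyticity}, and is consistent with the failure of convergence outside the Fuchsian class as exhibited in \cite{divergence}.

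The main obstacle is precisely this flat-solvability step: one must invert a singular first-order differential operator on $M$ taking flat input to flat output, and it is only the Fuchsian behaviour that rules out the small-divisor/resonance phenomena responsible for the known divergence examples. Once the flat correction is carried out, $L^\infty\in\hol{\infty}(M,p)$ is a smooth infinitesimal CR-automorphism with Taylor series equal to $L$; \autoref{analyticity} then upgrades $L^\infty$ to a holomorphic vector field near $p$, so $\hat P$ and $\hat Q$ converge, completing the proof.
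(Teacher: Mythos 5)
Your overall logic (realize $L\in\hol{f}(M,p)$ as the Taylor series of some $L^\infty\in\hol{\infty}(M,p)$, then apply \autoref{analyticity}) would indeed imply \autoref{convergence} if it could be carried out, but the realization step is a genuine gap, not a routine lemma. After Borel--Whitney extension you have smooth $P_0,Q_0$ whose tangency defect is flat on $M$ at $p$, and you must then produce \emph{flat} corrections $\delta P,\delta Q$ which are themselves smooth CR functions on $M$ and which kill the defect exactly. This is a surjectivity statement for a degenerate first-order operator (the tangency condition coupled with the tangential Cauchy--Riemann equations) acting on flat functions on a hypersurface of infinite type, and nothing in the paper -- in particular nothing in the proof of \autoref{analyticity}, which is a \emph{regularity} statement about solutions that are assumed to exist, not an existence statement -- provides such solvability. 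Your appeal to ``the Fuchsian bounds \eqref{fuchsianity} are exactly what forces\dots flat smooth solutions whenever the right-hand side is flat'' is an assertion, not an argument; even for the scalar Fuchsian model $w u'(w)=\lambda u(w)+g(w)$ with flat $g$, producing a flat solution requires a separate (if elementary) analysis, and here the problem is a coupled PDE system on $M$, not an ODE. Without this step the reduction does not close.

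The paper's proof goes in a different and much shorter direction: it never leaves the formal category. One writes the special $2$-jet prolongation of the formal field $L$ (substituting $w_1=\zeta w^m$), imposes tangency to the associated singular ODE \eqref{ODE}, collects the coefficients of $\zeta^0,\dots,\zeta^3$ to recover the equations \eqref{initial} verbatim for formal series, and arrives -- exactly as in the smooth case -- at the first-order linear system \eqref{completefuchsian}, which is \emph{Fuchsian} thanks to \eqref{fuchsianityODE}. The conclusion is then the classical fact (see \cite{vazow}) that every formal power series solution of a Fuchsian linear system is convergent. In other words, the Fuchsian hypothesis is used at the very end, on the derived linear system, where convergence of formal solutions is a known theorem; it is not used to solve a flat division problem on $M$. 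If you want to salvage your approach you would need to prove the flat realizability of formal infinitesimal automorphisms as an independent result, which is at least as hard as the theorem itself; I recommend instead rerunning your jet-prolongation computation formally and invoking the Fuchsian convergence theorem directly.
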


As the construction in Section 4 and the examples provided in \cite{divergence} show, the result in \autoref{convergence} is optimal. 

\begin{remark}\label{solve} We emphasize that \autoref{analyticity} and \autoref{convergence}  above give a solution to a problem formulated in \cite{nonminimalODE}, which is the general problem of distinguishing the right class of Fuchsian type hypersurfaces, as hypersurfaces with ``regular behaviour'' of CR-maps at the nonminimal locus.  
\end{remark}

\subsection{Principal method} The main tool of the paper is the recent
CR \,$\lr$\,DS (Cauchy-Riemann manifolds \,$\lr$\,\,Dynamical Systems) technique introduced by Shafikov and both 
authors in their earlier work \cite{divergence,nonminimalODE,nonanalytic}. The technique suggests to
replace a given CR-submanifold $M$ with a CR-degeneracy (such as
nonminimality) by an appropriate holomorphic dynamical system
$\mathcal E(M)$, and then study mappings of CR-submanifolds
accordingly.   The possibility to replace a real-analytic
CR-manifold by a complex dynamical system is based on the
fundamental parallel between CR-geometry and the geometry of
completely integrable PDE systems. This parallel was first observed by E.~Cartan and
Segre \cite{cartan,segre} (see also Webster \cite{webster}), and was revisited and developed in the important series of publications by
Sukhov \cite{sukhov1,sukhov2}. The ``mediator''
between a CR-manifold and the associated PDE system is the Segre
family of the CR-manifold. Unlike the nondegenerate setting in the cited work \cite{cartan,segre,sukhov1,sukhov2}, the CR\,-\,DS technique  deals systematically with the degenerate setting, providing sort of a dictionary between CR-geometry and Dynamical Systems.
We refer to Section 2 for further details and references and briefly mention here that, using the CR -- DS technique, we can interpret infinitesimal CR-automorphisms as {\em Lie symmetries} of the associated dynamical systems (singular differential equations). After that, the polynomiality of the jet prolongation of a vector field in the jet variables allows us to work out the tangency condition of a Lie symmetry with the differential equation, and then prove the desired properties of the solutions of the tangency equations by applying methods of the theory of complex ODEs with an isolated singularity.

\bigskip

\begin{center}\bf Acknowledgments \end{center}

\smallskip

The first author is fully,  and the second author is partially supported by the Austrian Science Fund (FWF). 

\smallskip

\section{Preliminaries}
\label{sec:prelim}
\subsection{Nonminimal real hypersurfaces} 
\label{sub:nonminimal_real_hypersurfaces}
We recall that given a real-analytic hypersurface $M\subset \CC{2}$, for 
every $p\in M$ there exist so-called {\em normal coordinates} $(z,w)$ centered
at $p$, i.e. a local holomorphic coordinate system near $p$ in 
which $p=0$ and near $0$, $M$ is defined by an equation of the form 
\[ v = F (z, \bar z , u)\] 
for some germ $F$ of a holomorphic function on $\CC{3}$ which satisfies 
\[ F(z,0,u) = F(0,\bar z, u) = 0\]
and the 
{\em reality condition} $F(z,\bar z,u) \in \RR{} $ for $(z,u)\in \CC{} \times\RR{}$ close to $0$ (see e.g. \cite{ber}). 

We say that $M$ is {\em nonminimal} at $p$ if there exists a germ of a nontrivial
complex curve $X\subset M$ through $p$. It turns out that in normal coordinates, such 
a curve $X$ is necessarily defined by $w = 0$; 
in particular, any such $X$ is nonsingular.

Thus $M$ is nonminimal if and only if with normal coordinates $(z,w)$ and a 
defining function $F$ as above, we have that $F(z,\bar z,0) = 0$, or equivalently, 
if $M$ can defined by an equation of the form 
\begin{equation}\label{mnonminimal}
v=u^m\psi(z,\bar z,u), 
\text{ with } \psi(z,0,u) = \psi(0,\bar z, u)= 0 \text{ and } \psi(z,\bar z,0)\not\equiv 0,
\end{equation}
where $m\geq 1$.
 
It turns out that the integer $m\geq 1$ is independent of the choice
of normal coordinates  (see \cite{meylan}), and actually also 
of the choice of $p\in X$; we refer to $m$ as the 
{\em nonminimality order} of $M$ on $X$ (or at $p$) and say that
$M$ is $m$-nonminimal along $X$ (or at $p$).

Several other variants of defining functions for $M$ are useful. The first one is 
the complex defining function $R$ in which $M$ is defined by 
\[  w = R (z,\bar z,\bar w) ;\]
it is obtained from $F$ by solving the equation 
\[ \frac{w - \bar w}{2i} = F \left(z,\bar z, \frac{w+\bar w}{2} \right) \]
for $w$. The complex defining function satisfies the conditions 
\[ R(z,0,\tau) = R (0, \chi, \tau ) = \tau, \quad R(z,\chi,\bar R (\chi, z, w)) = w. \]
If $M$ is $m$-nonminimal at $p$, then $R (z,\chi,\tau) = \tau \theta (z,\chi,\tau)$ and 
thus $M$ is defined by 
\[ w = \bar w \theta(z,\bar z,\bar w) = \bar w (1 +\bar w^{m-1}\tilde \theta(z,\bar z,\bar w) ), \text{ where } 
\tilde \theta (z,0,\tau) = \tilde \theta (0,\chi, \tau) = 0 \text{ and }  
\tilde\theta (z,\chi,0)\neq 0.\]
We can also rewrite 
the complex defining equation in its  {\em exponential form} 
\begin{equation}
\label{exponential}  w=\bar we^{i\varphi(z,\bar z,\bar w)} = \bar w e^{i \bar w^{m-1}\tilde\varphi (z,\bar z,\bar w)}, \text{ where } \tilde \varphi(z,0,\bar w) = \tilde \varphi(0,\bar z, \bar w) = 0 \text{ and } \tilde \varphi(z,\bar z,0)\not\equiv 0.
\end{equation}
 An  equation of the form \eqref{exponential} defines a real hypersurface if and 
 only if $\varphi$ satisfies the reality condition 
 \begin{equation}
\label{reality}
\varphi(z,\bar z,w\,e^{-i\bar\varphi(\bar z,z,w)})\equiv
\bar\varphi(\bar z,z,w).
\end{equation}

The {\em Segre family} of $M$, where $M$ is given in 
normal coordinates as above, with the complex defining function 
$R\colon U_z\times \bar U_z \times \bar U_w = U_z \times \bar U \to U_w$ consists of the complex hypersurfaces $Q_\zeta \subset U$, defined for  $\zeta \in U$
by 
\[ Q_\zeta = \{(z,w) \colon w = R (z,\bar \zeta)\}. \]
The real line $\Gamma = \{(z,w)\in M \colon z = 0 \} = \{(0,u) \in M \colon u\in\RR{}\}\subset M$
has the property that 
\[ Q_{(0,u)} = \{w = u\}, \quad (0,u)\in \Gamma \]
for $u\in\RR{}$, a property which actually is equivalent to the normality of 
the coordinates $(z,w)$. More exactly, for any real-analytic curve 
$\gamma$ through $p$ one can find normal coordinates $(z,w)$ in 
which $\gamma$ corresponds to $\Gamma$ (see e.g. \cite{lmblowups}). 

We shall use  a  technique to resolve (in a certain sense) degeneracies
of a real hypersurface $M$ introduced in \cite{lmblowups}:

\begin{lemma}[Blow-up Lemma]\label{blowuplemma}
Let $M\subset\CC{2}$ be a real-analytic hypersurface, which is Levi-degenerate at the origin and Levi-nonflat. Assume that  $M$ is given in normal coordinates \eqref{mnonminimal} and that the curve $\Gamma\subset M$, as in \eqref{Gamma}, does not contain Levi-degenerate points other than the origin. Then there exists  a blow-down map 
\begin{equation}\label{lmblowup}
F(\xi,\eta):\quad (\CC{2},0)\lr (\CC{2},0),\quad F(\xi,\eta)=(\xi\eta^s,\eta^2),\quad s\in\mathbb{Z},\quad s\geq 2,
\end{equation}
and a real-analytic nonminimal at the origin hypersurface $M^*\subset\CC{2}_{(\xi,\eta)}$ with the complex locus $X=\{\eta=0\}$ such that:

\smallskip

(i) $F(M^*)\subset M,\quad F(X)=\{0\}$;

\smallskip

(ii) $M^*\setminus X$ is Levi-nondegenerate.

\smallskip

(iii) $M^*$ is also given in normal coordinates.
\end{lemma}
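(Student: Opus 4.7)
The plan is to construct $M^*$ explicitly as the principal branch of the pullback of $M$ under a monomial map $F(\xi,\eta) = (\xi\eta^s,\eta^2)$ with $s\geq 2$ chosen from the normal-form data of $M$, and then verify (i)--(iii) by direct inspection. First I would pass to the exponential complex defining equation
\[
w = \bar w\exp\!\bigl(i\,\bar w^{m-1}\tilde\varphi(z,\bar z,\bar w)\bigr),
\]
with $\tilde\varphi(z,0,\bar w) = \tilde\varphi(0,\bar z,\bar w) = 0$ and $\tilde\varphi(z,\bar z,0)\not\equiv 0$. The hypothesis that $\Gamma\setminus\{0\}$ carries no Levi-degenerate points forces the Taylor expansion of $\tilde\varphi(z,\bar z,0)$ to contain a nonzero mixed monomial $c_{k_0 l_0}z^{k_0}\bar z^{l_0}$ with $k_0,l_0\geq 1$; let $d = k_0 + l_0$ be the minimal such total degree.

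Substituting $z = \xi\eta^s$, $w = \eta^2$ into the complex defining equation yields
\[
\eta^2 = \bar\eta^2\exp\!\bigl(i\,\bar\eta^{2(m-1)}\tilde\varphi(\xi\eta^s,\bar\xi\bar\eta^s,\bar\eta^2)\bigr).
\]
Choosing the branch of the square root taking the value $1$ at the origin and applying the implicit function theorem for $\eta$, I obtain a real-analytic solution $\eta = \bar\eta\,\Theta^*(\xi,\bar\xi,\bar\eta)$ with $\Theta^*(0,0,0) = 1$. Specializing to $\xi = 0$ and to $\bar\xi = 0$ collapses the exponent, because $\tilde\varphi$ vanishes identically on $\{z=0\}\cup\{\bar z = 0\}$, giving $\Theta^*(0,\bar\xi,\bar\eta) = \Theta^*(\xi,0,\bar\eta) = 1$. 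Hence the hypersurface $M^* = \{\eta = \bar\eta\,\Theta^*(\xi,\bar\xi,\bar\eta)\}$ is automatically in normal coordinates, has complex locus $X = \{\eta = 0\}$ and normal curve $\Gamma^* = \{\xi = 0,\,\im\eta = 0\}$. Since $F$ maps $M^*$ into $M$ by construction and contracts $\{\eta = 0\}$ to the origin, properties (i) and (iii) follow.

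The delicate step is the nondegeneracy claim in (ii). I would expand
\[
\Theta^*(\xi,\bar\xi,\bar\eta) \;=\; 1 + \tfrac{i}{2}\,\bar\eta^{2(m-1)}\tilde\varphi\bigl(\xi\bar\eta^s,\bar\xi\bar\eta^s,\bar\eta^2\bigr) + \cdots,
\]
where iterating the implicit equation replaces $\eta$ by $\bar\eta$ up to strictly higher-order corrections, and then read off the coefficient of the lowest-order monomial in $\xi\bar\xi$. This coefficient is a nonzero multiple of $c_{k_0 l_0}$ times a pure power of $\bar\eta$, and it governs the leading part of the Levi form of $M^*$ along $X$. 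Choosing $s\geq 2$ large enough that no weighted-degree collision can kill the contribution of $c_{k_0 l_0}$, I would conclude that the Levi form of $M^*$ is nonzero at a generic point of $\Gamma^*\setminus\{0\}$; since $F$ is a local biholomorphism off $\{\eta = 0\}$, any residual Levi-nondegeneracy of $M\setminus X$ near $\Gamma\setminus\{0\}$ transfers to $M^*\setminus X$, and real-analytic propagation of the Levi form then forces the remaining Levi-degenerate locus of $M^*$ to lie inside $X$ after shrinking. The main obstacle is the combinatorial bookkeeping in solving the implicit equation and tracking how the substitutions $z\mapsto \xi\eta^s$, $\bar z\mapsto \bar\xi\bar\eta^s$ interact with the reality condition on $\tilde\varphi$; the constraint $s\geq 2$ in \eqref{lmblowup} is precisely what guarantees that this tracking goes through without cancellation of the leading $\xi\bar\xi$-coefficient.
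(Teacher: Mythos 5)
The paper does not prove this lemma itself: it is imported from the resolution-of-degeneracies construction of Lamel and Mir \cite{lmblowups}, so there is no internal proof to compare against and your attempt has to be judged on its own terms. Your construction of $M^*$ (substitute $z=\xi\eta^s$, $w=\eta^2$ into the exponential complex defining equation, take the branch with $\eta/\bar\eta\to 1$, solve for $\eta$ by the implicit function theorem, and observe that the exponent collapses on $\{\xi=0\}\cup\{\bar\xi=0\}$) is the right one and yields (i) and (iii) essentially for free. Two minor remarks: the presence of a nonzero mixed monomial $c_{k_0l_0}z^{k_0}\bar z^{l_0}$ with $k_0,l_0\ge 1$ in $\tilde\varphi(z,\bar z,0)$ is automatic from normality together with $\tilde\varphi(z,\bar z,0)\not\equiv 0$ and does not use the hypothesis on $\Gamma$ at all; and you should say a word about why the chosen branch is stable under complex conjugation, so that $M^*$ is a genuine real-analytic real hypersurface rather than just a branch of a real-analytic set.

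The genuine gap is in (ii), which is the entire content of the lemma. What your argument establishes, at best, is that the Levi form of $M^*$ is nonzero at a generic point, i.e.\ that $M^*$ is Levi-nonflat; but (ii) requires Levi-nondegeneracy at \emph{every} point of $M^*\setminus X$. The closing step --- ``real-analytic propagation of the Levi form then forces the remaining Levi-degenerate locus of $M^*$ to lie inside $X$ after shrinking'' --- is not a valid deduction: a proper real-analytic component of the degeneracy locus passing through the origin but not contained in $\{\eta=0\}$ survives every shrinking of the neighborhood. Indeed $M$ itself may carry exactly such a component $\Sigma_1$, and the purpose of the lemma is to eliminate it; nothing in your argument explains why $M^*$ cannot inherit or acquire one. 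The mechanism that rules this out, and the actual role of the hypothesis on $\Gamma$, is a separation argument fixing $s$: since no point of $\Gamma\setminus\{0\}$ is Levi-degenerate, the Levi determinant $\lambda$ of $M$ restricted to $\{z=0\}$ vanishes at $u=0$ to some finite order $N$; the image of a polydisc under $F$ lies in the horn $\bigl\{|z|\le C|w|^{s/2}\bigr\}$, on which $\lambda=\lambda(0,0,u)+O(|z|)$ is nonvanishing for $w\neq 0$ once $s/2>N$. Hence every point of $F(M^*\setminus X)$ is a Levi-nondegenerate point of $M$, and since $F$ is locally biholomorphic off $\{\eta=0\}$, (ii) follows. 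Without some version of this step your choice of $s$ is never actually pinned down and (ii) remains unproved.
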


\subsection{Real hypersurfaces and second order differential equations.}\label{sub:realhyp2ndorderequ}
To every Levi nondegenerate real hypersurface
$M\subset\CC{N}$  we can associate a system of second order
holomorphic PDEs with $1$ dependent and $N-1$ independent
variables, using the Segre family of the hypersurface. This remarkable construction
 goes back to
E.~Cartan \cite{cartanODE},\cite{cartan} and Segre \cite{segre} (see also \cite{webster}),
and was recently revisited in
\cite{sukhov1},\cite{sukhov2},\cite{nurowski},\cite{gm},\cite{divergence},\cite{nonminimalODE},\cite{nonanalytic} (see also
references therein). 
We  describe this procedure in the case
$N=2$ relevant for our purposes. For the details of the theory of Segre varieties we refer to \cite{ber},\cite{DiPi}.

Let $M\subset\CC{2}$ be a smooth real-analytic
hypersurface, passing through the origin, and $U = U_z \times U_w$
 a sufficiently small neighborhood of the origin. In this case
we associate a second order holomorphic ODE to $M$, which is uniquely determined by the condition that the equation is satisfied by all the
graphing functions $h(z,\zeta) = w(z)$ of the
Segre family $\{Q_\zeta\}_{\zeta\in U}$ of $M$ in a
neighbourhood of the origin.

More precisely, since $M$ is Levi-nondegenerate
near the origin, the Segre map
$\zeta\lr Q_\zeta$ is injective and the Segre family has
the so-called transversality property: if two distinct Segre
varieties intersect at a point $q\in U$, then their intersection
at $q$ is transverse. Thus, $\{Q_\zeta\}_{\zeta\in U}$ is a
2-parameter  family of holomorphic
curves in $U$ with the transversality property, depending
holomorphically on $\bar\zeta$. It follows from
the holomorphic version of the fundamental ODE theorem (see, e.g.,
\cite{ilyashenko}) that there exists a unique second order
holomorphic ODE $w''=\Phi(z,w,w')$, satisfied by all the graphing functions of
$\{Q_\zeta\}_{\zeta\in U}$.

To be more explicit we consider the complex defining equation $w=R(z,\bar z,\bar w)$, as introduced above. 
 The Segre
variety $Q_\zeta$ of a point $\zeta=(a,b)\in U$ is  now given
as the graph
\begin{equation} \label{segre0}w (z)=R(z,\bar a,\bar b). \end{equation}
Differentiating \eqref{segre} once, we obtain
\begin{equation}\label{segreder} w'=\rho_z(z,\bar a,\bar b). \end{equation}
Considering \eqref{segre0} and \eqref{segreder}  as a holomorphic
system of equations with the unknowns $\bar a,\bar b$, an
application of the implicit function theorem yields holomorphic functions
 $A, B$ such that
$$
\bar a=A(z,w,w'),\,\bar b=B(z,w,w').
$$
The implicit function theorem applies here because the
Jacobian of the system coincides with the Levi determinant of $M$
for $(z,w)\in M$ (\cite{ber}). Differentiating \eqref{segreder} once more
and substituting for $\bar a,\bar b$ finally
yields
\begin{equation}\label{segreder2}
w''=\rho_{zz}(z,A(z,w,w'),B(z,w,w'))=:\Phi(z,w,w').
\end{equation}
Now \eqref{segreder2} is the desired holomorphic second order ODE
$\mathcal E = \mathcal{E}(M) $.

More generally, the association of   a completely integrable PDE  with
a CR-manifold is possible for a wide range of
CR-submanifolds (see \cite{sukhov1,sukhov2,gm}). The
correspondence $M\lr \mathcal E(M)$ has the following fundamental
properties:

\begin{enumerate}

\item[(1)] Every local holomorphic equivalence $F:\, (M,0)\lr (M',0)$
between CR-submanifolds is an equivalence between the
corresponding PDE systems $\mathcal E(M),\mathcal E(M')$ (see \autoref{sub:equiv2ndorder});

\medskip

\item[(2)] The complexification of the infinitesimal automorphism algebra
$\mathfrak{hol}^\omega(M,0)$ of $M$ at the origin coincides with the Lie
symmetry algebra  of the associated PDE system $\mathcal E(M)$
(see, e.g., \cite{olver} for the details of the concept).

\end{enumerate}

Even though for a real hypersurface
$M\subset\CC{2}$ which is nonminimal at the origin there is no a priori way to associate
to $M$ a second order ODE or even a more general PDE system near
the origin, in \cite{nonminimalODE} the Shafikov and the first author
found an injective correspondence between  nonminimal at the
origin and spherical outside the complex locus hypersurfaces
$M\subset\CC{2}$ and certain {\em singular} complex ODEs $\mathcal E(M)$ with an
isolated  singularity at the origin. It is possible to extend this construction to the non-spherical case, which we do in Section 3.



\subsection{Equivalences and symmetries of  ODEs}\label{sub:equiv2ndorder}
We start with a description of the jet prolongation approach to
the equivalence problem (which is a simple interpretation of a
more general approach in the context of {\em jet bundles}). We refer to the excellent sources \cite{olver}, \cite{bluman} for more details and collect the necessary 
prerequisites here.
In what follows all variables are assumed to be complex, all
mappings biholomorphic, and all ODEs to be defined near their zero
solution $y(x)=0$.

Consider two ODEs,  $\mathcal E$ given by $y^{(k)}=\Phi(x,y,y',...,y^{(k-1)} )$
and
$\tilde{\mathcal E}$ given by $ y^{(k)}=\tilde\Phi(x,y,y',...,y^{(k-1)})$, where the functions
$\Phi$ and $\tilde\Phi$ are holomorphic in some neighbourhood of the
origin in $\CC{k+1}$. We say that a germ  of a biholomorphism
$F \colon (\CC{2},0)\lr(\CC{2},0)$  transforms $\mathcal E$ into
$\tilde{\mathcal E}$, if it sends (locally) graphs of solutions of
$\mathcal E$ into graphs of solutions of $\tilde{\mathcal E}$.
We define the {\em $k$-jet space} $J^{(k)}$ to be the $(k+2)$-dimensional
 linear space with coordinates $x,y,y_1,...,y_{k}$,
which correspond to the independent variable $x$, the dependent
variable $y$ and its derivatives up to order $k$, so that we can
naturally consider $\mathcal E$ and $\tilde{\mathcal E}$ as complex
submanifolds of $J^{(k)}$.

For any biholomorphism $F$ as above one may consider its
{\em $k$-jet prolongation} $F^{(k)}$, which is defined on a neighbourhood of the origin in $\CC{k+2}$ as follows.
The first two components of the mapping $F^{(k)}$
coincide with those of $F$. To obtain the remaining components  we
denote the coordinates in the preimage by $(x,y)$ and in the
target domain by $(X,Y)$. Then the derivative $\frac{dY}{dX}$ can
be symbolically recalculated, using the chain rule, in terms of
$x,y,y'$, so that the third coordinate $Y_1$ in the target jet
space becomes a function of $x,y,y_1$. In the same manner one
obtains the remaining components of the prolongation of the
mapping $F$. Thus, for differential equations of order $k$, {\em the mapping $F$ transforms the ODE $\mathcal E$ into $\tilde{\mathcal
E}$ if and only if the prolonged mapping $F^{(k)}$ transforms
$(\mathcal E,0)$ into $(\tilde{\mathcal E},0)$ as submanifolds in the
jet space $J^{(k)}$}. A similar statement can be formulated for
certain singular differential equations, for example, for linear
ODEs (see, e.g., \cite{ilyashenko}).

The jet prolongation approach becomes particularly effective in its application to automorphisms (point symmetries) of differential equations. A vector field $L$ is called a {\em Lie point symmetry} of a differential equation, if its local flow $F_t$ consists of symmetries of the equation. By differentiating the $k$-jet prolongation formulas for $F_t$ with respect to the time $t$, one gets a new vector field $L^{(k)}$ in the $k$-jet space. Then S.~Lie\rq{}s Theorem says that

\smallskip

{\em a vector field $L$ is a Lie symmetry of a differential equation $\mathcal E$ of order $k$ if and only if its $k$-jet prolongation $L^{(k)}$ is tangent to $({\mathcal E},0)$ (considered as a submanifold in the
jet space $J^{(k)}$).}

 \smallskip
 
  S.Lie\rq{}s Theorem is a powerful tool for investigating symmetries of differential equations in view of the fact that the formulas for the prolonged vector fields $L^{(k)}$ are explicit (in terms of the initial vector field $L$) and relatively simple (see \cite{bluman},\cite{olver}). In addition, importantly, {\em $L^{(k)}$ is always polynomial in the jet variables $y_1,...,y_k$}. This property is crucial for the constructions used in the  paper.

\subsection{Complex linear differential equations with an isolated singularity} \mbox{}
Complex linear ODEs are important classical objects, whose geometric
interpretations are plentiful. We
refer to the excellent sources \cite{ilyashenko}, \cite{ai},
\cite{bolibruh}, \cite{vazow},\cite{coddington}  on  complex linear differential equations, gathering here the facts that we will need in
the sequel.

A {\em first order linear system} of $n$ complex ODEs in a domain
$G\subset\CC{}$ (or simply a {\em linear system} in a domain $G$)
is a holomorphic ODE system $\mathcal L$ of the form
$y'(x)=A(x)y(x)$, where $A(x)$ is a
holomorphic function on $G$, taking values in the space of  $n\times n$ matrices,
 and $y(x)=(y_1(x),...,y_n(x))$
is an $n$-tuple of (unknown) functions. The set of  solutions of $\mathcal L$
near a point $p\in G$ form a linear space of dimension~$n$.
Moreover, any germ at $p\in G$ of a solution of $\mathcal L$
extends analytically along any path $\gamma\subset G$, starting
at~$p$, so that any solution $y(x)$ of $\mathcal L$ is defined
globally in $G$ as a (possibly multiple-valued) analytic function.
 A \em fundamental system of solutions for $\mathcal L$ \rm
is a matrix whose columns form some collection of $n$ linearly
independent solutions of $\mathcal L$.

If $G$ is a punctured disc, centered at $0$, we say
that $\mathcal L$  is a system {\em with an isolated singularity at $x=0$}.
An important (and sometimes even a complete) characterization
of an isolated singularity is its {\em  monodromy operator}, which is
defined as follows. If $Y(x)$ is some fundamental system of
solutions of $\mathcal L$ in $G$, and $\gamma$ is a simple loop
about the origin, then it is not difficult to see that the
monodromy of $Y(x)$ with respect to $\gamma$ is given by the right
multiplication by a constant nondegenerate matrix $M$, called the
{\em monodromy matrix}.  The matrix $M$ is defined up to a
similarity, so that it defines a linear operator
$\CC{n}\lr\CC{n}$, which is  called the monodromy operator of the
singularity.

If $A(x)$ has a pole at the isolated singularity $x=0$,
we say that the system has a {\em meromorphic singularity}. As
the solutions of $\mathcal L$ are holomorphic in any proper sector
$S\subset G$ of a sufficiently small radius with vertex at
$x=0$, it is important to study the behaviour of the solutions as
$x\rightarrow 0$. If all solutions of $\mathcal L$ admit a bound
$||y(x)||\leq C|x|^a$  in any such sector (with some constants
$C>0,\ a\in \mathbb R$, depending possibly on the sector), then
$x=0$ is a {\em regular singularity}, otherwise it is
 an {\em irregular singularity}.  In particular, if the monodromy is trivial, then the singularity is regular if and
only if all the solutions of $\mathcal L$ are meromorphic at $x=0$.

L.~Fuchs introduced the following condition: the singular point
$x=0$ is {\em Fuchsian}, if $A(x)$ is meromorphic  at
$x=0$ and has a pole of order $\leq 1$ there. The Fuchsian
condition turns out to be sufficient for the regularity of a
singular point. Another remarkable property of Fuchsian
singularities can be described as follows. We say that
two complex
linear systems with an isolated singularity $\mathcal L_1,\mathcal
L_2$ are {\em (formally) equivalent},  if there exists a (formal)
transformation $F:\,(\CC{n+1},0)\lr(\CC{n+1},0)$ of the form
$F(x,y)=(x,H(x)y)$ for some (formal) invertible matrix-valued
function $H(x)$, which transforms (formally) $\mathcal L_1$ into
$\mathcal L_2$. It turns out that two Fuchsian systems are
formally equivalent if and only if they are holomorphically equivalent
 (in fact, any formal equivalence between them as
above must be convergent). Any Fuchsian system can be brought to a
special polynomial  form (in the sense that the matrix $xA(x)$ is
polynomial) called the {\em Poincare-Dulac normal form for Fuchsian
systems}, and moreover, the normalizing transformation is
always convergent.

However, in the {\em non}-Fuchsian case the behavior of solutions
and mappings between  linear systems is totally different.
Generically, solutions of a non-Fuchsian system
\[y'=\frac{1}{x^m}B(x)y, \quad m\geq 2\]  do {\em not } have polynomial
growth in sectors, and formal equivalences between non-Fuchsian
systems are divergent, as a rule. Also the transformation
bringing a non-resonant non-Fuchsian system to a special
polynomial form called {\em Poincare-Dulac normal form for
non-Fuchsian systems}  is usually also divergent. As some
compensation for this divergence phenomenon, we formulate below a
remarkable result,  {\em Sibuya's sectorial normalization
theorem},   which is of fundamental importance for our
constructions.

For a system $y'=\frac{1}{x^m}B(x)y,\,m\geq 2$
which is non-resonant (i.e., the leading matrix $B_0=B(0)$ has
pairwise distinct eigenvalues $\{\lambda_1,...,\lambda_n\}$) we
call each of the $2(m-1)$ rays
$R_{ij}=\left\{\re\left((\lambda_i-\lambda_j)x^{1-m}\right)=0\right\},\,i,j=1,...,n,\,i\neq
j,$  a {\em separating ray for the system}. 

\begin{theorem} [Y.~Sibuya, 1962, see
\cite{sibuya},\cite{ilyashenko}]\label{thm:sibuya} Assume that  a non-Fuchsian
linear system $\mathcal
E$
\[y'=\frac{1}{x^m}B(x)y,\quad m\geq 2\]
is non-resonant
 and $S \subset (\CC{}, 0)$
is an arbitrary sector with vertex at $0$ not containing two
separating rays for any pair of the eigenvalues. Then for any
formal conjugacy $x\mapsto x,\,y\mapsto \hat H(x)y$, conjugating
the system with its Poincare-Dulac polynomial normal form, there
exists a holomorphic  function
$H_S(x)$ defined in $S$ and taking values in $\mbox{GL}(n,\CC{})$ such
that
$H_S(x)$ asymptotically represents $\hat H(x)$
in $S$ (see Introduction for this concept), and $x\mapsto x,\,y\mapsto H_S(x)y$ conjugates $\mathcal E$
with its Poincare-Dulac normal form in $S$  for . If a sector $S$ has
opening bigger than $\frac{\pi}{m-1}$, then the sectorial
normalization $H_S(x)$ is unique.
\end{theorem}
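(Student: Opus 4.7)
The plan is to prove Sibuya's theorem via the classical Borel--Laplace summation method applied to the formal conjugacy $\hat H(x)$. First, since $B_0 = B(0)$ has $n$ pairwise distinct eigenvalues $\lambda_1,\ldots,\lambda_n$, a constant linear change of coordinates reduces to the case where $B_0$ is diagonal. The Poincar\'e--Dulac normal form is then a polynomial system $y' = \frac{1}{x^m} B_{\mathrm{norm}}(x) y$, and the formal conjugacy $\hat H(x)$ satisfies the matrix ODE derived from the chain rule,
\[
x^m \hat H'(x) = B(x)\hat H(x) - \hat H(x) B_{\mathrm{norm}}(x).
\]
Expanding component by component, the off-diagonal entries satisfy scalar irregular-singular equations of the form
\[
x^m h_{ij}'(x) = (\lambda_i - \lambda_j) h_{ij}(x) + \Phi_{ij}(x,H),
\]
and a standard majorant argument shows that the coefficients of $\hat h_{ij}(x)$ grow at a Gevrey rate of order $1/(m-1)$; in other words, $\hat H$ is $k$-summable with $k = m-1$.

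The heart of the argument is to realize $\hat H(x)$ as a genuine holomorphic function $H_S(x)$ on the given sector $S$. I would apply the formal Borel transform $\mathcal B_k$ of order $k$ to $\hat H(x)$: the Gevrey estimate above converts the formal series into a convergent power series on a neighborhood of the origin of the Borel plane. The next, and central, step is to show that this germ continues analytically along every ray issuing from the origin whose direction lies in $S$, with the only possible singularities situated at the points proportional to $\lambda_i - \lambda_j$, whose arguments correspond precisely to the separating rays $R_{ij}$. The hypothesis that $S$ contains no two separating rays for any pair of eigenvalues is exactly what guarantees that such an analytic continuation is unobstructed and grows at most exponentially of order $k$, so that the Laplace transform $\mathcal L_k$ of order $k$ integrated along rays in $S$ produces a holomorphic function $H_S(x)$ on $S$ with asymptotic expansion $\hat H(x)$. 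Since both $\hat H$ and $H_S$ satisfy the same matrix ODE and have the same asymptotic expansion, $H_S$ is a true sectorial conjugacy of $\mathcal E$ with its Poincar\'e--Dulac normal form on $S$.

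The uniqueness for sectors of opening larger than $\pi/(m-1)$ then follows from Watson's lemma: the ratio of two such sectorial normalizations is asymptotic to the identity, and a function holomorphic on a sector of opening greater than $\pi/k$ whose asymptotic expansion is the zero series must vanish identically. The principal obstacle in the whole argument is the analytic continuation of the Borel transform with precise control of the Stokes phenomena in the Borel plane; this is the step where the arithmetic condition on separating rays becomes indispensable and where all the delicate estimates of Sibuya's original paper are concentrated. An alternative route, which I would develop in parallel as a sanity check, is to first construct a canonical sectorial fundamental matrix $Y_S(x)$ of $\mathcal E$ by a fixed-point argument in an appropriate weighted sup-norm space on $S$ (with prescribed asymptotics $\hat H(x) Y_{\mathrm{norm}}(x)$), and then recover $H_S(x) = Y_S(x)\, Y_{\mathrm{norm}}(x)^{-1}$; the Stokes-phenomenon bookkeeping reappears there in the guise of the eigenvalue-ordering needed to pin down $Y_S$ uniquely.
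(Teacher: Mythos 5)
First, a point of orientation: the paper does not prove this statement at all --- it is quoted as a classical theorem of Sibuya, with references to \cite{sibuya} and \cite{ilyashenko}, and is used in Section 3 purely as a black box. So there is no proof in the paper to compare yours against; what follows is an assessment of your outline on its own terms.

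Your route --- Gevrey estimates for $\hat H$, Borel transform of order $k=m-1$, analytic continuation in the Borel plane away from singularities located by the differences $\lambda_i-\lambda_j$, Laplace resummation along directions in $S$, and Watson's lemma for uniqueness on sectors of opening greater than $\pi/(m-1)$ --- is the standard modern ($k$-summability) proof of Sibuya's theorem, and its skeleton is sound: the conjugacy equation $x^m\hat H'=B\hat H-\hat H B_{\mathrm{norm}}$ is correct, and the uniqueness argument is exactly Watson's lemma. But as written this is a program, not a proof, because the two claims that carry all the weight are only asserted. First, the Gevrey-$1/(m-1)$ growth of the coefficients of $\hat H$ is not a routine majorant argument: one must invert the operator $h\mapsto x^m h'-(\lambda_i-\lambda_j)h$ on formal series and balance the loss of powers of $x$ under differentiation against division by $\lambda_i-\lambda_j$, and for $m>2$ this must be set up in the ramified variable $t=x^{m-1}$, since the Borel transform of order $k$ lives in the variable dual to $x^{-k}$, not to $x^{-1}$; your statement that the singularities sit ``at points proportional to $\lambda_i-\lambda_j$'' only makes sense after this ramification. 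Second, the analytic continuation of the Borel transform with exponential bounds of order $k$ along all non-singular directions --- which you yourself identify as the step ``where all the delicate estimates of Sibuya's original paper are concentrated'' --- is precisely the content of the theorem; deferring it means the theorem is not proved. Your alternative fixed-point route (which is in fact closer to Sibuya's original integral-equation argument) has the same status: the choice of integration paths in the Volterra-type operator, dictated by which exponentials $e^{t_i(1/x)-t_j(1/x)}$ are recessive on $S$, is exactly where the hypothesis that $S$ contains at most one separating ray per pair enters, and it is not carried out. A complete write-up must either execute one of these two programs in full or, as the paper does, cite \cite{sibuya} or \cite{ilyashenko}.
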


\noindent Alternatively, one can require for the uniqueness in Sibuya\rq{}s theorem that the sector $S$ contains a separating ray for each pair of eigenvalues of the leading matrix. 

Importantly, one can use Sibuya\rq{}s theorem for establishing asymptotic expansion properties of solutions, thanks to the fact for a system in the Poincare-Dulac normal form the solutions can be described explicitly (see \cite{vazow},\cite{coddington} for details). 

We note
that the holomorphic sectorial normalization in \autoref{thm:sibuya} does usually
{\em not} extend to one holomorphic near the origin. The reason is that, somewhat surprisingly, the sectorial
normalization $H_S(x)$ might change from sector to sector. The corresponding
solutions transported from the solutions of the system in normal form 
by $H_S(x)$ differ from each other by means
of multiplication by a constant matrix $C\in \mbox{GL}(n,\CC{})$
called a  {\em Stokes matrix}.  This phenomenon is known as
the {\em Stokes phenomenon},  and the entire collection $\{C_{ij}\}$ of
Stokes matrixes, corresponding to all separating rays, is called
the {\em Stokes collection}. For a generic equation the collection of 
Stokes matrixes contains nontrivial (non-identity) matrices. 

A scalar linear complex ODE of order $n$ in a
 domain $G\subset\CC{}$ is an ODE $\mathcal E$ of the form
$$z^{(n)}=a_{n}(x)z+a_{n-1}(x)z'+...+a_1(x)z^{(n-1)},$$ where $\{a_1(x), \dots a_n (x)\}$ is
a given collection of holomorphic functions in $G$ and $z(x)$ is
the unknown function. By a reduction of $\mathcal E$ to a first
order linear system (see the above references and
also~\cite{vyugin} for various approaches of doing that) one can
naturally transfer most of the definitions and facts, relevant to
linear systems, to scalar equations of order $n$. The main
difference here is contained in the appropriate definition of
Fuchsian: a singular point $x=0$ for an ODE $\mathcal E$ is said to be
{\em Fuchsian},  if the orders of poles $p_j$ of the functions
$a_j(x)$ satisfy the inequalities $p_j\leq j$, $j=1,2,\dots,n$. It
turns out that the condition of Fuchs becomes also necessary for
the regularity of a singular point in the case of $n$-th order
scalar ODEs.

Further information on the classification of isolated
singularities (including details of Poincare-Dulac normalizations
in the Fuchsian and non-Fuchsian cases respectively) can be found
in \cite{ilyashenko}, \cite{vazow} or \cite{coddington}.

\section{Analytic continuation of infinitesimal automorphisms}\label{sec:cont}

\subsection{Complete system at $m$-admissible points}

We use the notations from the Introduction and for a Levi-nonflat real-analytic hypersurface $M\subset\CC{2}$, containing a complex hypersurface $X$,  consider the decomposition of its Levi degeneracy set $\Sigma$ as $\Sigma=X\cup \Sigma_1$. If $p\notin X\cap\Sigma_1$, then, according to \cite{nonminimalODE}, there exists local holomorphic coordinates at $p$ such that $p$ is the origin, the complex locus $X=\{w=0\}$, and $M$ is represented by an equation 
\begin{equation}\label{madmissiblereal}
v=u^m\left(\pm |z|^2+\sum_{k,l\geq 2}h_{kl}(u)z^k\bar z^l\right).
\end{equation}
Alternatively, one can consider the complex defining equation at the origin which looks as
\begin{equation}\label{madmissible}
w=\bar w e^{\pm i\bar w^{m-1}\varphi(z,\bar z,\bar w)},\quad\mbox{where}\quad\varphi(z,\bar z,\bar w)=z\bar z+\sum_{k,l\geq 2}\varphi_{kl}(\bar w)z^k\bar z^l.
\end{equation}
Here $m\geq 1$ is the nonminimality order at $p$. 
A real hypersurface defined by an equation  as in \eqref{madmissible} is called 
{\em $m$-admissible} (see \cite{divergence,nonanalytic}). 
Importantly, an $m$-admissible hyperurface automatically 
contains the curve $\Gamma$ defined in  \eqref{Gamma}. Depending on the sign in \eqref{madmissible},
 $M$ is called {\em positive} or {\em negative} respectively (note that this 
 notion might depend on the choice of coordinates). Let us denote by $\tilde U\subset \CC{2}$ a 
 neighborhood of the origin, containing no points from $\Sigma_1$, where the Segre family 
 of $M$ is defined. We claim that there exists a unique holomorphic near the origin in
  $\CC{3}$ function $\Phi(z,w,\zeta)$ of the form $\Phi=O(w^m\zeta^2),$ and a polydisc
   $U\subset \tilde U$, centered at the origin, such that all Segre varieties $Q_p$ of
    $M$ with $p\in U,\,p\notin X$ (considered as graphes $w=w_p(z)$) satisfy the
     second order {\em singular} ODE 
\begin{equation}\label{ODE}
w\rq{}\rq{}=\Phi\left(z,w,\frac{w\rq{}}{w^m}\right).
\end{equation}
The latter fact was proved in \cite{nonminimalODE} for the special case of hypersurfaces, spherical at a generic point. However, it is not difficult to extend the proof for the case of a general $m$-admissible hypersurface, which we do now.

 Assume that $M$ is positive first.  Take then an arbitrary $p=(\xi,\eta)\in
U\setminus X$ and consider Segre
varieties $Q_p$ of $M$ as graphs 
\begin{equation}\label{segre}
w=w_p(z)=\bar\eta
e^{i\bar\eta^{m-1}\varphi(z,\bar \xi,\bar\eta)},\quad \varphi(z,\bar \xi,\bar\eta)=z\bar\xi+O(z^2\bar\xi^2).
\end{equation} 
We have
\begin{equation}\label{approximation}
w=\bar\eta+O(z\bar\xi\bar\eta^m),\quad \frac{w'}{w^m}=-i\bar\xi+O(z\bar\xi),
\quad w''=O(\bar\xi^2\bar\eta^m).
\end{equation}
and use the relations \eqref{approximation} in order to obtain a
second order ODE satisfied by all $Q_p,\,p\in
U\setminus X$. An application of  the
implicit function theorem to the first two equations in   \eqref{approximation}
yields  functions
$\Lambda(z,w,\zeta)=i\zeta+O(z\zeta)$ and $\Omega(z,w,\zeta)=w+O(zw\zeta)$, such that
$$\bar\xi=\Lambda\left(z,w,\frac{w'}{w^m}\right),\,\bar\eta=\Omega\left(z,w,\frac{w'}{w^m}\right).$$
 Substituting
$\bar\xi=\Lambda(z,w,\frac{w'}{w^m}),\,\bar\eta=\Omega(z,w,\frac{w'}{w^m})$
into the equation for $w''$ in \eqref{approximation} gives us a
second order ODE
\eqref{ODE} for some
function $\Phi(z,w,\zeta)$, holomorphic in a polydisc $ W \subset\CC{3}$, centered at the origin (compare this with the
elimination procedure in Section~2). The ODE \eqref{ODE} is
satisfied by all $Q_p$ with $p\in
U\setminus X$. In addition, the function
$\Phi(z,w,\zeta)$ also satisfies $\Phi(z,w,\zeta)=O(\zeta^2w^m)$. 

The proof in the case when $M$ is negative is analogues. 
The ODE \eqref{ODE} is called {\em the associated ODE for $M$} and is denoted by $\mathcal E(M)$. 

Let now $L=P\dz+Q\dw$ be a smooth infinitesimal CR automorphism of $M$, defined on a 
neighbourhood $V\subset U$ of $q$, where $q\in (M\setminus X)\cap U$. We recall
to the reader that $L$ {\em extends holomorphically to a neighbourhood of $q$} (because $q$ is a 
minimal point); we will use this fact without further mentioning in the sequel. 
Denote by $\Delta\subset \CC{}$ a disc such 
that the right-hand side in \eqref{ODE} is well defined in the domain
 $\tilde V:=V\times \Delta\subset\CC{2,1}$. Since the real flow of $L$ near $q$ preserves 
 the Segre family of $M$,  
  $L$ is also a Lie symmetry of the {\em nonsingular} in $\tilde V$ ODE \eqref{ODE}. 
  Thus the second prolongation $L^{(2)}$ of $L$ is tangent to \eqref{ODE}
   over this domain. We have 
\begin{equation}\label{prolong}
L^{(2)}=P(z,w)\dz+Q(z,w)\dw+Q^{(1)}(z,w,w_1)\frac{\partial}{\partial w_1}+Q^{(2)}(z,w,w_1,w_2)\frac{\partial}{\partial w_2},
\end{equation}
where 
\begin{gather*}w_1:=w\rq{},\quad w_2:=w\rq{}\rq{},\\
 Q^{(1)}=Q_z+\bigl(Q_w-P_z\bigr)w_1-P_w(w_1)^2,\\
Q^{(2)}=Q_{zz}+\bigl(2Q_{zw}-P_{zz}\bigr)w_1+\bigl(Q_{ww}-2P_{zw}\bigr)(w_1)^2-\\
-P_{ww}(w_1)^3
+\bigl(Q_w-2P_z)w_2-3P_w w_1w_2 
\end{gather*}
(see, e.g., \cite{bluman}). Restricting onto $w_2=\Phi\bigl(z,w,\frac{w_1}{w^m}\bigr)$, we get
\begin{equation}\label{tangency}
Q^{(2)}\left|_{w_2=\Phi\bigl(z,w,\frac{w_1}{w^m}\bigr)}\right.=\Phi_z\left(z,w,\frac{w_1}{w^m}\right)\cdot P + \Phi_w\left(z,w,\frac{w_1}{w^m}\right)\cdot Q+\frac{1}{w^m}\Phi_{\zeta}\left(z,w,\frac{w_1}{w^m}\right)\cdot Q^{(1)}
\end{equation}
for all $(z,w,w_1)\in\tilde V$. Note that in view of the fact that \eqref{prolong} is polynomial in $w_1$, for any fixed $(z,w)\in V$ both sides in \eqref{tangency} are holomorphic in $w_1$ at the origin. 
Let us expand $\Phi$ as 
$$\Phi=w^m\bigl(\Phi_2(z,w)\zeta^2+\Phi_3(z,w)\zeta^3+\Phi_4(z,w)\zeta^4+O(\zeta^5)\bigr),$$
where $\Phi_2,\Phi_3,\Phi_4$ are holomorphic at the origin, and denote
$$a:=\frac{1}{w^{m}}\Phi_2,\quad b:=\frac{1}{w^{2m}}\Phi_3,\quad c:=\frac{1}{w^{3m}}\Phi_4.$$
Then, gathering in \eqref{tangency} for any fixed $(z,w)\in V$ terms with $(w_1)^0, (w_1)^1, (w_1)^2, (w_1)^3$, we get respectively
\begin{equation}\label{initial}
\begin{aligned}
Q_{zz} &=0,\\
 2Q_{zw}-P_{zz}&=2aQ_z,\\
 Q_{ww}-2P_{zw}&=a(-Q_w+2P_z)+a_zP+a_wQ+3bQ_z+2a(Q_w-P_z),\\
 P_{ww}&=b(Q_w-2P_z)-aP_w-b_zP-b_wQ-4cQ_z+3b(P_z-Q_w).
\end{aligned}
\end{equation}

We then differentiate each of the four identities \eqref{initial} in $z$ and $w$ once. We get a system $S$ of $8$ equations of order $3$, expressing the {\em complete} set of third order derivatives of $P$ and $Q$ as linear combinations of lower order derivatives with coefficients holomorphic in $z$ and meromorphic in $w$ at the origin. (The order of the pole of $w$ at $0$ here does not exceed $3m+1$). Introduce
$$y(z,w):\quad\CC{2}\lr \CC{12},\quad y:=(P,Q,P_z,P_w,Q_z,Q_w,P_{zz},P_{zw},P_{ww},Q_{zz},Q_{zw},Q_{ww}).$$
Then, taking into account the relations between the components of the vector function $y$ and the system $S$, we obtain the following system of first order linear PDEs:
\begin{equation}\label{complete}
\frac{\partial y}{\partial z}=A(z,w)y,\quad \frac{\partial y}{\partial w}=B(z,w)y,
\end{equation}
where $A(z,w),B(z,w)$ are holomorphic in $U\setminus X$  $12\times 12$ matrix functions with a meromorphic singularity of order $\leq 3m+1$ along $X=\{w=0\}$. 

\subsection{Extension along a path} We now need the following
\begin{lemma}\label{polydisc}
Let $\Delta_1,\Delta_2,\Delta\rq{}_1,\Delta\rq{}_2$ be discs in $\CC{}$, $\Delta\rq{}_1\subset\Delta_1,\,\Delta\rq{}_2\subset\Delta_2,\quad \Delta_1\times\Delta_2\subset U\setminus X$. Then any solution $y(z,w)$ of the system \eqref{complete} in the polydisc $\Delta\rq{}_1\times\Delta\rq{}_2$ extends holomorphically to $\Delta_1\times\Delta_2$.
\end{lemma}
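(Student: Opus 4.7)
The plan is to exploit the fact that \eqref{complete} is a \emph{completely integrable} linear first order Pfaffian system, whose coefficient matrices $A,B$ are holomorphic on the \emph{entire} target polydisc $\Delta_1\times\Delta_2$ (since by hypothesis this polydisc avoids the polar locus $X=\{w=0\}$). Thus, there are no singularities of the coefficients to overcome; the content of the lemma is purely the monodromy-free extension of a solution of an integrable system across a simply connected domain.

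First, I would record the compatibility (Frobenius) identity. Since a bona fide solution $y$ of \eqref{complete} exists on $\Delta'_1\times\Delta'_2$, the mixed partials $\partial^2 y/\partial z\partial w$ computed two ways must agree, and since the columns of $y$ acting on a generic initial condition can be made linearly independent (by choosing an appropriate solution of the initial value problem at one point), this forces
\begin{equation*}
\frac{\partial A}{\partial w} - \frac{\partial B}{\partial z} + AB - BA = 0
\end{equation*}
throughout $\Delta'_1\times\Delta'_2$. The matrices $A,B$ are holomorphic on all of $U\setminus X$, so by the identity principle this compatibility relation propagates to every connected component of $U\setminus X$ that meets $\Delta'_1\times\Delta'_2$; in particular it holds on the entire polydisc $\Delta_1\times\Delta_2$.

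Next, I would carry out the extension itself by a two-step integration along a rectangular path. Fix a base point $(z_0,w_0)\in\Delta'_1\times\Delta'_2$ and let $y_0:=y(z_0,w_0)$. For an arbitrary $(z_1,w_1)\in\Delta_1\times\Delta_2$, first solve the linear ODE
\begin{equation*}
\frac{d\tilde y}{dw}=B(z_0,w)\tilde y,\qquad \tilde y(w_0)=y_0,
\end{equation*}
along the disc $\{z_0\}\times\Delta_2$; since $B(z_0,\cdot)$ is holomorphic on the simply connected disc $\Delta_2$, this gives a holomorphic solution on $\{z_0\}\times\Delta_2$. Then for each $w\in\Delta_2$ solve
\begin{equation*}
\frac{dY}{dz}=A(z,w)Y,\qquad Y(z_0,w)=\tilde y(w),
\end{equation*}
on the simply connected disc $\Delta_1$. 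Holomorphic dependence on parameters yields a function $Y(z,w)$ holomorphic on $\Delta_1\times\Delta_2$, which by construction satisfies $\partial_z Y=AY$ everywhere and satisfies $\partial_w Y = BY$ on the slice $z=z_0$. The Frobenius identity of the previous paragraph, together with uniqueness of solutions of $\partial_z(\cdot)=A(\cdot)$, then forces $\partial_w Y - BY\equiv 0$ throughout $\Delta_1\times\Delta_2$; this is the standard argument that integrability plus an initial condition globalizes the solution on a simply connected base.

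Finally, a uniqueness argument shows that the extension $Y$ agrees with the original $y$ on $\Delta'_1\times\Delta'_2$: both satisfy the same integrable system and the same value at $(z_0,w_0)$, so they coincide in a neighborhood of $(z_0,w_0)$ and hence, by analytic continuation within $\Delta'_1\times\Delta'_2$, on the whole connected subdomain. The only subtlety worth flagging is the verification that the Frobenius condition is a genuine consequence of the existence of \emph{one} solution $y$ rather than a basis; but this is handled by noting that the system \eqref{complete} is built from a \emph{linear} tangency equation and thus has a full-rank family of solutions locally, or alternatively by directly computing $\partial_w(Ay)-\partial_z(By) = \bigl(\partial_w A-\partial_z B +[A,B]\bigr)y=0$ and invoking that $y$ can be chosen with any initial vector $y_0\in\CC{12}$.
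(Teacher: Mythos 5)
Your rectangular\,-\,path construction is sound, but the justification you give for it has a genuine gap: the Frobenius identity $\partial_w A-\partial_z B+AB-BA=0$ is never actually established. Both of your proposed derivations rest on the claim that \eqref{complete} admits solutions with arbitrary initial vectors $y_0\in\CC{12}$ (equivalently, a full\,-\,rank local family of solutions). But that is precisely what complete integrability would provide, so the argument is circular, and it is not available a priori: \eqref{complete} is an \emph{overdetermined} Pfaffian system obtained by differentiating the tangency equations \eqref{initial}, and the only solutions you are guaranteed to have are the $2$-jets of infinitesimal automorphisms, which span a subspace of dimension at most $8$ (the bound on the symmetry algebra of a second order ODE), not $12$. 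Consequently the direct computation $\bigl(\partial_w A-\partial_z B+[A,B]\bigr)y=0$ only shows that the curvature annihilates the particular solutions at hand, not that it vanishes identically. Without the Frobenius identity your argument that $\partial_w Y-BY\equiv 0$ propagates off the slice $z=z_0$ collapses, and with it the uniqueness step identifying $Y$ with $y$ on $\Delta'_1\times\Delta'_2$.

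The construction can, however, be repaired without any integrability input, and the repair is essentially the paper's own proof. Do not compare $Y$ and $y$ as solutions of the full Pfaffian system; compare them one variable at a time using only one equation of \eqref{complete} at each stage. Uniqueness for the linear ODE $\frac{d}{dw}=B(z_0,w)$ gives $\tilde y=y(z_0,\cdot)$ on $\Delta'_2$; then, for each fixed $w\in\Delta'_2$, uniqueness for $\frac{d}{dz}=A(\cdot,w)$ gives $Y(\cdot,w)=y(\cdot,w)$ on $\Delta'_1$. Hence $Y$, which is holomorphic on $\Delta_1\times\Delta_2$ by holomorphic dependence on initial conditions and parameters, already extends $y$; and if one needs the extension to satisfy \eqref{complete} on the larger polydisc (as one does when iterating along a path), this follows from the identity principle, since $\partial_z Y-AY$ and $\partial_w Y-BY$ are holomorphic on the connected set $\Delta_1\times\Delta_2$ and vanish on the open subset $\Delta'_1\times\Delta'_2$. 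The paper argues in exactly this one\,-\,variable\,-\,at\,-\,a\,-\,time spirit: it expands $y=\sum_j\varphi_j(w)z^j$, uses the first equation of \eqref{complete} for fixed $w$ to extend in $z$ across $\Delta_1$, then the second equation to extend in $w$, with joint holomorphy supplied by the series; no compatibility of the two equations is ever invoked.
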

\begin{proof}
Consider first the case when both polydiscs have the same center. Without loss of generality, assume then that they are both centered at $0$. We have 
\begin{equation}\label{expansion}
y(z,w)=\sum_{j=1}^\infty \varphi_j(w)z^j,\quad (z,w)\in\Delta\rq{}_1\times\Delta\rq{}_2,\quad \varphi_j(w)\in\mathcal O(\Delta\rq{}_2).
\end{equation}
Consider now the first equation in \eqref{complete} for a fixed $w=w_0\in\Delta\rq{}_2$. Then we get a first order linear ODE for $y(z,w_0)$ in $\Delta\rq{}_1$. As a well known fact (e.g., \cite{ilyashenko}), any local solution of it extends to $\Delta_1$ holomorphically. Thus the series \eqref{expansion}, considered for a fixed $w=w_0$, converges in $\Delta_1$. Hence the formula \eqref{expansion} provides a holomorphic extension of $y$ to $\Delta_1\times \Delta\rq{}_2$. Similar arguments then give the holomorphic extension of $y(z,w)$ from $\Delta_1\times \Delta\rq{}_2$ to $\Delta_1\times \Delta_2$, as required.

In the general case, we use finitely many iterations of the above argument to obtain an extension of $y(z,w)$ to a polydisc $\Delta_1\rq{}\rq{}\times\Delta_2\rq{}\rq{}$ having the same center as $\Delta_1\times\Delta_2$. Applying then the above argument, we obtain the desired extension to $\Delta_1\times\Delta_2$ (note that, since a polydisc is simply-connected, this extension must agree with $y(z,w)$ in the polydisc  $\Delta\rq{}_1\times\Delta\rq{}_2$).
\end{proof}

\begin{proof}[Proof of \autoref{extension}] Since $q$ is a finite type point, $L$ extends analytically to an open neighborhood of $q$ in $\CC{2}$. Now application of \autoref{polydisc} to a finite sequence of polydiscs, contained in $U\setminus X$ and covering a path $\gamma\subset U\setminus X$ starting at $q$, proves statement (i) in \autoref{extension}. To prove statement (ii), we apply the Blow-up Lemma (see Section 2) and then statement (i). 
\end{proof}

 \subsection{The infinitesimal monodromy} We use the (multi-valued) extension phenomenon, discovered in the previous section, to introduce the notion of an {\em infinitesimal monodromy} of a nonminimal hypersurface. We consider here the more transparent case when $M\setminus X$ is Levi-nondegenerate, even though in the general setting this can also be done.

  Let $M$ be nonminimal at a point $p$, and $X\ni p$ its complex locus. Assume that $(M\setminus X)\cap U$ is Levi-nondegenerate and fix a point $q\in (M\setminus X)\cap U$, where $U$ is a neighborhood where the assertion of \autoref{extension} holds. Consider a vector field $L=P\dz+Q\dw\in\hol{}(M,q)$ and a loop $\gamma$, starting at $q$ and generating $\pi_1(U\setminus X)$. It turns out that the analytic continuation of $L$ along $\gamma$  (possible by \autoref{extension}) is again an infinitesimal automorphism of $M$: Indeed, if $M$ is given in $U$ by a complex defining equation $w=\rho(z,\bar z, \bar w)$, then the tangency condition for $L$ becomes
$$Q(z,w)=\rho_z(z,\bar z,\bar w)P(z,w)+\rho_{\bar z}(z,\bar z,\bar w)\bar P(\bar z,\bar w)+\rho_{\bar w}(z,\bar z,\bar w)\bar Q(\bar z,\bar w),\quad \text{ if } w = \rho(z,\bar z, \bar w).$$ 
The latter condition is invariant under the analytic continuation of $P$ and $Q$, so that the continuation of $L$ is again an infinitesimal automorhism and hence satisfies \eqref{systeminw}. We obtain a correspondence
\begin{equation}\label{psi}
\psi_\gamma:\quad \hol{}(M,q)\mapsto \hol{}(M,q).
\end{equation}
Clearly, $\psi_\gamma$ is linear, injective and invariant under the Lie bracket operation, so that $\psi$ is {\em an automorphism} of the finite-dimensional Lie algebra $\mathfrak g=\hol{}(M,q)$. The automorphism $\psi_\gamma$ is independent of the choice of the basic point $q$ and the loop $\gamma$. 

\begin{definition}\label{infmonodromy}
We call $\psi_\gamma$ {\em the infinitesimal monodromy} of $M$. 
\end{definition}

The infinitesimal monodromy is {\em a biholomorphic invariant} of a nonminimal hypersurface. This invariant is additional to the Lie algebra structure of $\hol{}(M,p)$ and to that of the infinitesimal automorphism algebra at Levi-nondegenerate points $q$. The infinitesimal monodromy should be compared with the monodromy of a map into a quadric, introduce in \cite{nonminimal} for the case when $M\setminus X$ is Levi-nondegenerate and spherical.

\subsection{Sectorial extendability of CR-objects}
In this section we prove

\begin{proposition}\label{correctness} Let $M\subset\CC{2}$ be a real-analytic nonminimal at the origin Levi-nonflat hypersurface. 

\smallskip

\noindent (i) If $D$ is a tangential sectorial domain for  $M$ at the origin, then, for any local holomorphic coordinates where the complex locus of $M$ is given by \eqref{locus} and $M$ contains the curve \eqref{Gamma}, $D$ contains a standard tangential sectorial domain \eqref{standard}. 

\smallskip
 
\noindent (ii) If a $C^\infty$ CR-function $f$ on $M$  defined near the origin is sectorially extendable to some tangential sectorial domain $D$, then it is also sectorially extendable to a tangential sectorial domain $\tilde D$ corresponding to another choice of coordinates.

\end{proposition}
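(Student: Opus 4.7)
My plan is to derive both parts from a careful analysis of the biholomorphism $\Phi$ relating two admissible coordinate systems $(z, w)$ and $(\tilde z, \tilde w)$ for $M$ at the origin. Writing $\Phi(z, w) = (a(z, w),\, b(z, w))$ with $a(0) = b(0) = 0$, the condition that $\Phi$ preserves the complex locus $X = \{w = 0\}$ immediately forces $b(z, w) = w \cdot B(z, w)$ for some holomorphic germ $B$ with $B(0,0) \ne 0$; the Jacobian condition then also gives $a_z(0) \ne 0$. The decisive step, which I expect to be the main obstacle, is to establish that $B(0, 0) \in \mathbb{R}\setminus\{0\}$.

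To accomplish this, I would pull the curve $\{\tilde z = 0,\ \Im \tilde w = 0\}\subset M$ back via $\Phi^{-1}$. Solving $a(z, w) = 0$ and $w\,B(z, w) = \tilde u \in \mathbb{R}$ to leading order yields a real-analytic arc $(z(\tilde u), w(\tilde u))$ with $w(\tilde u) = \tilde u/B(0, 0) + O(\tilde u^2)$ and $z(\tilde u) = O(\tilde u)$. Both $X \subset M$ and $\Gamma \subset M$ in the $(z, w)$-coordinates force the defining equation of $M$ to take the shape $\Im w = u\cdot G(z, \bar z, u)$ with $G$ vanishing at $z = \bar z = 0$; consequently the right-hand side along this arc is $O(\tilde u^2)$, while the left-hand side equals $\tilde u\cdot \Im(1/B(0, 0)) + O(\tilde u^2)$. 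Matching the coefficient of $\tilde u$ forces $\Im(1/B(0, 0)) = 0$, i.e., $B(0, 0) \in \mathbb{R}\setminus\{0\}$, as desired.

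Part (i) then becomes nearly immediate: for $(z, w)$ sufficiently close to $0$, one has $\arg \tilde w = \arg w + \kappa\pi + o(1)$ with $\kappa \in \{0, 1\}$ according to the sign of $B(0, 0)$, and since the union $S^+ \cup \{0\} \cup S^-$ is invariant under $w \mapsto -w$, any standard tangential sectorial domain in the $(\tilde z, \tilde w)$-coordinates with sufficiently small radii and opening angle is carried by $\Phi^{-1}$ into $D$. In particular, the opening angle of the new sector need only be smaller than the old one minus an arbitrarily small error, so there is no loss in the angle parameter beyond an asymptotically negligible amount.

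For part (ii), I would proceed by transport. Let $f^\pm$ be the given sectorial extensions of $f$ on $D^\pm$ with common asymptotic expansion $\sum a_{kl}\,z^k w^l$. Using part (i), pick a standard tangential sectorial domain $\tilde D$ in the new coordinates with $\tilde D \subset D$, and set $\tilde f^\pm := f^\pm \circ \Phi^{-1}$ on $\tilde D^\pm$, with the $\pm$-pairing determined by the sign of $B(0, 0)$. These are holomorphic as compositions of holomorphic maps and coincide with $f$ on $\tilde D^\pm \cap M$ because $\Phi$ restricts to a CR-diffeomorphism of $M$. For the asymptotic expansion, write $\Phi^{-1}(\tilde z, \tilde w) = (\tilde z\cdot \tilde A(\tilde z, \tilde w),\ \tilde w\cdot \tilde B(\tilde z, \tilde w))$, formally substitute into $\sum a_{kl}\,z^k w^l$, and reorganize to obtain a formal series $\sum \tilde a_{KL}\,\tilde z^K \tilde w^L$. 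The standard fact that asymptotic expansions survive holomorphic substitutions vanishing at the origin, combined with $|\tilde z| \asymp |z|$ and $|\tilde w| \asymp |w|$ on $\tilde D^\pm$, then yields $\tilde f^\pm \sim \sum \tilde a_{KL}\,\tilde z^K \tilde w^L$ in $\tilde D^\pm$, completing the proof. The whole argument rests on the reality step above; the remaining pieces are routine consequences of standard asymptotic analysis.
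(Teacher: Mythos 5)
Your proof is correct and, for part (i), rests on the same pivot as the paper's: the transition map between two adapted coordinate systems preserves $\{w=0\}$ and has a \emph{real} nonzero $w$-derivative at the origin. The paper asserts this via a decomposition $F=G\circ H$ with $H=(z^*+p(w^*),q(w^*))$, $q\in\RR{}\{w\}$; you actually prove the reality of $B(0,0)$ by pulling back the curve $\{\tilde z=0,\ \Im\tilde w=0\}\subset M$ and comparing first-order terms in the defining equation, which is a welcome filling-in of a step the paper leaves implicit. For part (ii) your route is mildly different: you transport $f^\pm$ by composition and invoke the stability of asymptotic expansions under holomorphic substitutions vanishing at the origin, whereas the paper characterizes the existence of an asymptotic expansion by the existence of limits of all partial derivatives and then applies the chain rule; both are standard and equally short. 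One small inaccuracy: the first component of $\Phi^{-1}$ need not be divisible by $\tilde z$ (only the second component is divisible by $\tilde w$, since only $X$ is invariant, not $\{z=0\}$), so $\Phi^{-1}=(\alpha(\tilde z,\tilde w),\,\tilde w\tilde B(\tilde z,\tilde w))$ with $\alpha(0,0)=0$; this does not affect your substitution argument, which only uses that both components vanish at the origin.
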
   

In other words, the definitions of a tangential sectorial domain and the sectorial extendability are independent of the choice of coordinates where $M$ contains the curves \eqref{locus},\eqref{Gamma}. 

\begin{proof}
We first prove (i). Let us denote by $(z,w)$ the given local holomorphic coordinates,  by $(z^*,w^*)$ some coordinates where $D$ has the standard form \eqref{standard}, and by $F:\,(z^*,w^*) \mapsto (z,w)$ the biholomorphism between them. Then $F=G\circ H$, where $H$ has the form $z= z^*+p(w^*),\,w= q(w^*),\,q(w)\in\RR{}\{w\},$ and $G$ is a map with the identity linear part of the  form $z=O(z^*),\,w=O(w^*)$. For both $H$ and $G$ it is immediate  that the image of a standard sectorial domain \eqref{standard} contains a standard sectorial domain (in view of $q(w)\in\RR{}\{w\},\,q'(0)\neq 0$). This proves (i). Note that the radii of the disc and the sector in \eqref{standard} can arbitrarily shrink, while the angle of the sectors can be kept bounded from below.  

To prove (ii), we use the same notations (i.e., $\tilde D$ is standard in the coordinates $(z,w)$, while $D$ is standard in the coordinates $(z^*,w^*)$), and the same decomposition  $F=G\circ H$. We use (i) and conclude that, after shrinking $\tilde D$, we have $\tilde D\subset D$.  We next note that 

 \smallskip 

{\it a holomorphic function $f$ defined in a standard sectorial domain admits there an asymptotic representation if and only if  all its partial derivatives $f_{z^kw^l},\,k,l\geq 0$ have a limit in the domain as $(z,w)$ tends to the origin remaining in the domain.}

\smallskip

In view of the chain rule, the latter property for derivatives of order $\leq N$ for the function $f(z^*,w^*)$ implies the same property for derivatives of order $N$ for the function $f(z,w)$ in $\tilde D$, and this proves (ii) and the proposition.

\end{proof}

\subsection{Proof of the sectorial extension result} Let $L=P\dz+Q\dw$ be a $C^\infty$ infinitesimal CR automorphism
 of a hypersurface \eqref{madmissible} at the origin, defined in $M\cap U$ for a sufficiently small 
 neighborhood $U$ of the origin in $\CC{2}$. We consider first the case when $(M\setminus X)\cap U$ is
  Levi-nondegenerate.  Then for any $q\in (M\setminus X)\cap U$ the equations \eqref{initial} are applicable. The first equation in \eqref{initial} yields 
  \begin{equation}\label{findQ}
  Q(z,w)=Q_0(w)+Q_1(w)z,
  \end{equation}
  for $(z,w)$ in a sufficiently small neighbourhood of $q$. 
The fact that $Q$ is a $C^\infty$ CR-function on $M$ implies that  
the restrictions of $Q_0,Q_1$ onto the real line are $C^\infty$ functions at the origin, which extend 
holomorphically to a neighborhood of the set $w\in(-\varepsilon,0)\cup(0,\varepsilon)$ in $\CC{}$ for some $\varepsilon>0$.   Applying then the second equation in \eqref{initial}, we obtain
\begin{equation}\label{findP}
P(z,w)=P_0(w)+P_1(w)z+Q_1\rq{}(w)z^2-2Q_1(w)\tilde a(z,w),
\end{equation}
where $\tilde a=O(z^2),\,\, \tilde a_{zz}=a$, and $P_0,P_1$ are described similarly to $Q_0,Q_1$. Let us introduce the vector function
$$u(w):=\bigl(P_0,P_1,P_0\rq{},P_1\rq{},Q_0,Q_1,Q_0\rq{},Q_1\rq{}\bigr).$$
Then, substituting the above obtained formulas for $P,Q$ into the third and the fourth equations in \eqref{initial}, gathering there terms of degree $0$ and $1$ in $z$, and using the relations between the components of the vector $u(w)$, we get a system
\begin{equation}\label{systeminw}
\frac{du}{dw}=C(w)u,
\end{equation}
where $C(w)$ is a meromorphic at the origin $8\times 8$ matrix function (with the order of pole not exceeding $3m$).
We now need 

\begin{proposition}\label{solutions}
Let $u(w)$ be a solution of a linear ODE system 
\begin{equation}\label{linearsystem}
y\rq{}=A(w)y,\quad w\in\CC{},\quad y\in\CC{n}
\end{equation}
with an isolated meromorphic singularity at the point $w=0$, which is well defined and is $C^\infty$ smooth on the interval $w\in[0,\varepsilon)$. Then there exist a sector $S^+$, as in \eqref{sectors}, such that $u$ extends holomorphically to $S^+$ and admits there an asymptotic expansion, coincident with the Taylor expansion of $u(w)$ at the origin.
\end{proposition}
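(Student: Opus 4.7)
The plan is to extract from the $C^\infty$ Taylor data of $u$ on $[0,\varepsilon)$ a formal power series solution of the ODE, and then to realize this formal solution sectorially by means of \autoref{thm:sibuya}. Let $\hat u(w):=\sum_{k\geq 0}\frac{u^{(k)}(0)}{k!}w^k$. First I would verify that $\hat u$ is a formal power series solution of $y'=A(w)y$: writing $A(w)=w^{-r}B(w)$ with $r$ the order of the pole and $B$ holomorphic at $0$, the identity $w^r u'(w)=B(w)u(w)$ is one between two $C^\infty$ functions on $[0,\varepsilon)$, so matching their Taylor coefficients yields exactly the recurrence characterizing a formal power series solution of the system.

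In the Fuchsian case $r\leq 1$, Poincar\'e--Dulac theory for Fuchsian systems gives convergence of any formal solution; $\hat u$ thus converges to a holomorphic solution $\tilde u$ in a disc around $0$. Since $u-\tilde u$ is then a $C^\infty$ solution on $[0,\varepsilon)$ with trivial Taylor series at $0$, and Fuchsian systems admit no nonzero flat solutions (all solutions being of the form $w^\rho(\log w)^j\cdot(\text{holomorphic})$, of which only integer $\rho\geq 0$, $j=0$ give a $C^\infty$ function on $[0,\varepsilon)$), we obtain $u=\tilde u$, settling this case.

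In the non-Fuchsian case $r=m\geq 2$, I would assume non-resonance of the leading matrix (resonant cases reduce to this by block-triangularization). Apply \autoref{thm:sibuya} to the formal conjugation of $y'=Ay$ with its Poincar\'e--Dulac normal form to obtain, in a sector $S^+$ around the positive real axis avoiding the Stokes rays of the system, a holomorphic sectorial conjugation $H_{S^+}\sim\hat H$ on $S^+$. The normal form admits an explicit fundamental matrix $Y_0(w)$ with columns $w^{\rho_j}\exp(Q_j(1/w))$; hence $Y:=H_{S^+}Y_0$ is a fundamental matrix of the original system on $S^+$. Extending $u$ to $S^+$ by analytic continuation (the ODE is regular there), we write $u=Yc$ for some $c\in\CC{n}$. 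Since $u$ is $C^\infty$ on $[0,\varepsilon)$ and $H_{S^+}$ is bounded and invertible near $0$ in $S^+$, any column of $Y_0$ that is unbounded or oscillates on the positive real axis must have $c_j=0$: the vector $c$ is supported only on columns which are either pure powers $w^{\rho_j}$ (when $Q_j=0$) or subdominant on the positive real axis ($\re Q_j(1/w)\to -\infty$ there). Because $S^+$ avoids Stokes rays, each subdominant-on-axis column remains flat throughout $S^+$ and contributes nothing to the asymptotic expansion; only the pure-power columns contribute, yielding $u\sim H_{S^+}Y_0c\sim\hat H Y_0c=\hat u$ in $S^+$, as required.

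The hard part will be controlling the subdominant (flat-on-axis) contributions to $u$: they are invisible to the Taylor series yet need not be flat elsewhere in a sector. The Stokes-geometry bound on the opening of $S^+$ is precisely what keeps these contributions flat throughout the sector. Handling the resonant case, where $Y_0$ carries additional $\log w$ factors and $\hat H$ must be adapted accordingly, is a routine refinement of the same argument.
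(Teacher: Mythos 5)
Your overall strategy coincides with the paper's: pass to the sectorial fundamental matrix $H_{S^+}(w)\,w^G e^{T(1/w)}$ furnished by the Sibuya/Hukuhara--Turrittin theory on a thin sector containing $\RR{+}$, use the smoothness of $u$ on $[0,\varepsilon)$ to eliminate the exponential columns, observe that columns flat on the axis stay flat in a sufficiently thin sector, and identify the surviving power-series part with the Taylor series of $u$. Your componentwise formulation (each entry of $H_{S^+}^{-1}u$ is a single term $c_jw^{\rho_j}e^{Q_j(1/w)}$) is in fact cleaner than the paper's, which works with the sum $u_1=\sum_j c_jf_{1j}(w)w^{\alpha_j}e^{t_j(1/w)}$ and must isolate a dominant term; your treatment of the Fuchsian case $r\le 1$ and your observation that $\hat u$ is a formal solution are both correct (the paper dispatches the Fuchsian case in a parenthesis).

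There are, however, two genuine gaps. First, the elimination of the \emph{purely oscillatory} columns --- those with $Q_j\not\equiv 0$ but $\re Q_j(1/w)\equiv 0$ on $\RR{+}$ --- is asserted but not proved: the reason you give (boundedness and invertibility of $H_{S^+}$) only kills columns that are unbounded on the axis, whereas an oscillatory column $w^{\rho_j}e^{Q_j(1/w)}$ with $\re\rho_j\ge 0$ is perfectly bounded. This is the heart of the proposition (Steps 1 and 2 of the paper's proof), and what is needed is a differentiation argument: each component of $H_{S^+}^{-1}u$ must be $C^\infty$ up to $w=0$, while the $k$-th derivative of $w^{\rho_j}e^{Q_j(1/w)}$ has modulus of order $w^{\re\rho_j-k(\deg Q_j+1)}$ on the axis, which is unbounded for $k$ large; hence $c_j=0$. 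Second, the resonant case does not reduce to the non-resonant one by block-triangularizing the leading matrix: repeated eigenvalues genuinely change the structure of the formal fundamental solution, which then involves fractional powers $w^{1/k}$ and logarithms. The paper handles this by invoking the ramified sectorial fundamental solution (Wasow, Thm.~19.1; Coddington--Levinson, Thm.~5.1), substituting $\zeta=w^{1/k}$, rerunning the argument, and then checking that only the powers $\zeta^{kl}$ survive in the expansion; some such argument is required and is not a ``routine refinement.'' (A smaller point you leave implicit: for the surviving pure-power columns one must also see that $\rho_j\in\mathbb{Z}_{\ge 0}$; in your componentwise setup this is immediate from the smoothness of $c_jw^{\rho_j}$ on $[0,\varepsilon)$.)
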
 

\begin{proof} Consider first the case when the system \eqref{linearsystem} is non-resonant. Let $s\geq 2$ denotes the order of the singularity in \eqref{linearsystem} and $\lambda_1,...,\lambda_n$ the eigenvalues of the leading matrix (if $s=1$, the singularity is Fuchsian and $u$ in fact extends to $w=0$ holomorphically). Then (e.g. \cite{vazow})
there exists a sufficiently small sector $S^+$, as in \eqref{sectors}, such that the fundamental system of solutions of \eqref{linearsystem} in $S^+$ looks as 
$$F(w)w^G e^{T(w)}.$$
Here the matrix function $F(w)=\bigl( f_{ij}(w)\bigr)_{i,j=1}^n$ is holomorphic in $S^+$ and admits there an asymptotic expansion 
\[F \sim E+C_1w+..., \]
 where $E$ denotes the identity matrix, $G$ is a constant diagonal matrix, and
 \[T(w)=w^{1-s}(T_0+T_1w+...+T_{s-2}w^{s-2}),\] where all $T_l$ are diagonal and $T_0=\frac{1}{1-s}\mbox{diag}\{\lambda_1,...,\lambda_n\}$. Hence, if $F^j$ denote the columns of $F$ and $t_j(1/w)$ the diagonal elements of $T(w)$, for some constants $c_1,...,c_n$ we have
\begin{equation}\label{columns}
u(w)= \begin{pmatrix}
	u_1 (w) \\ \vdots \\ u_n(w)
\end{pmatrix}=\sum_{j=1}^n c_j F^j(w)w^{\alpha_j}e^{t_j(1/w)}.
\end{equation}
Let us for the sequel of the proof call an integer $j\in[1,n]$ {\em good} if either $c_j=0$, or the exponent $e^{t_{j}(1/w)}$ is flat, or $t_j(w)\equiv 0$, and call it {\em bad} otherwise. We aim to show that there are no bad integers.  We do so in two steps.

\smallskip

\noindent{\bf Step 1.} We claim that for all bad inregers we have $\re t_j(1/w)|_{w\in \RR{}}\equiv0$. Indeed, arguing by contradiction we choose  an integer $j_0$ such that $c_{j_0}\neq 0$ and  $w^{\alpha_{j_0}}e^{t_{j_0}(1/w)}$ has the maximal growth on $\RR{+}$  as $w$ tends to $0$ 
(in the sense that the ratio with all other analogues terms with $c_j\neq 0$ is bounded on the real line as $w$ tends to $0$; note that $j_0$ need not be unique). Note that, by the contradiction assumption, the inverse of $w^{\alpha_{j_0}}e^{t_{j_0}(1/w)}$ is flat. Without loss of generality assume $j_0=1$. Recall that $f_{11}(w)\rightarrow 1,\,f_{1j}(w)\rightarrow 0,\,j\neq 1$ as $w\rightarrow 0$, and $u_1(w)$ has an asymptotic expansion on $\RR{+}$  as $w$ tends to $0$. From \eqref{columns} we have
\begin{equation}\label{u1}
u_1(w)=\sum_{c_j\neq 0}c_jf_{1j}(w)w^{\alpha_{j}}e^{t_{j}(1/w)}.
\end{equation}
Dividing \eqref{u1} by $w^{\alpha_{1}}e^{t_{1}(1/w)}$, we get a contradiction, proving our claim.

\smallskip

\noindent{\bf Step 2.}  Recall that each $t_j(1/w)$ is a polynomial in $\frac{1}{w}$ of degree at most $s-1$. Amont all bad inregers $j$ (assuming that they exist), consider the maximal possible degree of such polynomial $t_j$. Without loss of generality let us assume that this maximal degree is $s-1$. Next, among all bad integers with maximal degree of $t_j$ being equal to $s-1$, consider the one with the maximal growth of $w^{\alpha_j}$ (in the above sense). Without loss of generality we assume that the latter integer is $1$. Recall also that (according to Step 1) for all bad integers we have $\left|e^{t_{j}(1/w)}\right|\equiv 1$ for $w\in\RR{}$. We then differentiate \eqref{u1} sufficiently many times. Then it follows from the construction that after $k$ differentiations with $k$ large enough the first term in the right-hand side of \eqref{u1} has growth strictly exceeding the growth of the other terms, in the sense that the right-hand side has the form 
\begin{equation}\label{differed}
c_1f_{11}(w)w^{\alpha_{1}}\left(-\frac{1}{w^2}\right)^k\left(t_1'\left(\frac{1}{w}\right)\right)^ke^{t_{1}(w)}\bigl(1+o(1)\bigl)
\end{equation}
(we again used the fact that $f_{11}(w)\rightarrow 1,\,f_{1j}(w)\rightarrow 0,\,j\neq 1$ as $w\rightarrow 0$). However, the left hand side has an asymptotic expansion in the positive real line and, in particular, has a limit as $w\rightarrow 0$, while for $k$ large enough \eqref{differed} does not have a limit as $w\rightarrow 0$, which gives a contradiction and proves that bad integers do not exist, as required.

Thus the sum in the right-hand side of \eqref{columns} consists of terms which  either are flat or have the form $c_jF^j(w)w^{\alpha_j},\,c_j\neq 0$.  Picking among the non-flat terms with $\alpha_j\not\in\mathbb{Z}$ the one with the maximal growth of $ w^{\alpha_j}$, we  argue similarly to Step 2 and use differentiations of \eqref{u1} to show that  all such $\alpha_j$ are integers. Now it is already easy to see that all terms in the right-hand side of \eqref{columns} either are flat or have an asymptotic expansion in the sector $S^+$. Note finally that if a function $w^{\alpha_{j}}e^{t_{j}(1/w)}$ is flat on $\RR{+}$ as $w$ tends to $0$, it is also flat in a sufficiently small sector $\tilde S^+$, as in \eqref{sectors}. This immediately implies that $u(w)$ has an asymptotic expansion in a sufficiently small sector.

In the resonant case, according to \cite{coddington} (see Theorem 5.1 there) or \cite{vazow} (see Theorem 19.1 there), for an appropriate sector $S^+$, as in \eqref{sectors}, the fundamental system of solutions of the system \eqref{systeminw} has the form
$$F(w)w^G e^{T(w)}.$$
Here $F(w)$ is holomorphic in $S^+$ and admits there an asymptotic expansion in powers of $w^{1/k}$ for some  integer $k\geq 1$, $G$ is a constant matrix, and $T(w)$ is a polynomial in $w^{-1/k}$ diagonal matrix function. 
Moreover, the formal determinant of $F(w)$ is nonvanishing (i.e., it has a non-zero constant term as a formal power series in $w^{1/k}$). 
We then set $\zeta:=w^{1/k}$ and conslude that
$$u(\zeta^k)=\tilde F(\zeta)\zeta^{kG} e^{\tilde T(\zeta)},$$
where $\tilde F:=F(\zeta^k)$ has an asymptotic expansion in $\zeta$ and $\tilde T:=T(\zeta^k)$ is polynomial in $1/\zeta$. Taking into acount the above nondegeneracy property of $F$, we repeat the argument in the non-resonant case (with minor modifications) to prove that $u(\zeta^k)$ has an asymptotic expansion in powers of $\zeta$ in a sector $\zeta\in\tilde S$. Comparing the latter expansion with the initial Taylor expansion of $u(w)$ in the positive real line, it easy  to conslude that the expansion of $u(\zeta^k)$ in $\tilde S$ has non-zero terms only of the form $c\zeta^{kl},\,l\in\mathbb{Z}_{\geq 0}$. This already implies the desired asymptotic representation of $u(w)$ in the sector  $\{w^{1/k}\in \tilde S\}$, as required.  
\end{proof}

Note that the ray $\RR{+}$ can be replaced in the assertion of \autoref{solutions} by any other ray $d$ with the vertex at $w=0$.

\begin{proof}[Proof of \autoref{sectorial}] The proof is now immediately obtained by applying \autoref{solutions} to the system \eqref{systeminw} and the rays $\RR{\pm}$.

\end{proof}



\section{Fuchsian type hypersurfaces}
 In this section we aim to prove \autoref{analyticity} and \autoref{convergence}. 
 
\subsection{The Fuchsian condition for real hypersurfaces} We first need to investigate the relation between the Fuchsian condition (see the Introduction) and the character of the associated ODE \eqref{ODE}. We start with
 
\begin{proposition}\label{fromreality}
 Let   $M\subset\CC{2}$ be a Fuchsian type hypersurface, given near the origin by an equation of the form \eqref{madmissible}. Then $M$ satisfies the more specified conditions  
 \begin{equation}\label{fuchsianity1}
\begin{aligned}
\mbox{ord}_0\,\varphi_{22}(u)&\geq m-1,\quad \mbox{ord}_0\, \varphi_{23}(u)=\mbox{ord}_0\, \varphi_{32}(u)\geq 2m-2,\quad \mbox{ord}_0\, \varphi_{33}(u)\geq 2m-2,\\ 
\mbox{ord}_0\,
\varphi_{24}(u)&=\mbox{ord}_0\, \varphi_{42}(u)\geq 3m-3, \quad \mbox{ord}_0\, \varphi_{34}(u)=\mbox{ord}_0\, \varphi_{43}(u)\geq 3m-3.
\end{aligned}
\end{equation}
For the real defining function, as in \eqref{madmissiblereal}, the Fuchsianity reads as
\begin{equation}\label{fuchsianityreal}
\begin{aligned}
\mbox{ord}_0\,h_{22}(u)&\geq m-1,\quad \mbox{ord}_0\, h_{23}(u)=\mbox{ord}_0\, h_{32}(u)\geq 2m-2,\quad \mbox{ord}_0\, h_{33}(u)\geq 2m-2,\\ 
\mbox{ord}_0\,
h_{24}(u)&=\mbox{ord}_0\, h_{42}(u)\geq 3m-3, \quad \mbox{ord}_0\, h_{34}(u)=\mbox{ord}_0\, h_{43}(u)\geq 3m-3.
\end{aligned}
\end{equation}
\end{proposition}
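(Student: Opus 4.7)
I plan to proceed in three steps, the heart being an explicit formal passage between the real defining equation \eqref{madmissiblereal} and the complex one \eqref{madmissible}.

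First, I use reality to establish the symmetric order identities. Requiring $v=\bar v$ on $u\in\RR{}$ in \eqref{madmissiblereal} forces $h_{lk}(u)=\overline{h_{kl}(u)}$ for real $u$, hence $h_{lk}(w)=\overline{h_{kl}(\bar w)}$ as holomorphic germs, so $\mbox{ord}_0 h_{kl}=\mbox{ord}_0 h_{lk}$. Together with Definition \ref{Fuchsian}, this produces \eqref{fuchsianityreal}. An analogous extraction from the reality condition \eqref{reality} at leading order in the third argument gives $\varphi_{lk}(w)=\overline{\varphi_{kl}(\bar w)}$, supplying the symmetric part of \eqref{fuchsianity1}.

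Second, I derive the explicit formal relation between $\varphi$ and $\psi:=\pm z\bar z+\sum_{k,l\geq 2}h_{kl}(u)z^k\bar z^l$. Taking the logarithm of $w/\bar w$ in \eqref{madmissible} and using $v=u^m\psi$ from \eqref{madmissiblereal}, together with the iterative substitution $u=\bar w+iu^m\psi$, one obtains
\begin{equation*}
\pm i\bar w^{m-1}\varphi=\log\!\left(1+\frac{2iu^m\psi}{\bar w}\right)=\sum_{n\geq 1}\frac{(-1)^{n-1}(2i)^n u^{nm}\psi^n}{n\bar w^n},
\end{equation*}
and after dividing by $\pm i\bar w^{m-1}$ and expanding $u^{nm}/\bar w^{n+m-1}\sim\bar w^{(n-1)(m-1)}$ around $u=\bar w$, a formal identity
\begin{equation*}
\varphi=\pm 2\psi+\sum_{n\geq 2}\bar w^{(n-1)(m-1)}P_n(\psi),
\end{equation*}
where $P_n(\psi)$ is a universal polynomial of homogeneous degree $n$ in $\psi$ with coefficients depending only on $m$ and $n$, and in particular $P_2(\psi)=\pm 2i(m-1)\psi^2$.

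Third, for each $(k,l)\in\{(2,2),(2,3),(3,3),(2,4),(3,4)\}$ I extract the coefficient of $z^k\bar z^l$ from the above expansion. Contributions come from decompositions $(k_1,l_1)+\cdots+(k_n,l_n)=(k,l)$ with each $(k_i,l_i)\in\{(1,1)\}\cup\{(p,q):p,q\geq 2\}$, where $(1,1)$ contributes the constant $\pm 1$ and $(p,q)$ with $p,q\geq 2$ contributes $h_{pq}(\bar w)$, all multiplied by the prefactor $\bar w^{(n-1)(m-1)}$. Direct enumeration gives: for $(2,3)$ and $(2,4)$ only $n=1$ is admissible, leaving merely $\pm 2h_{kl}(\bar w)$; for $(2,2)$ the decomposition $(1,1)+(1,1)$ yields a pure $\bar w^{m-1}$; for $(3,3)$ the decompositions $(1,1)+(2,2)$ and $(1,1)+(1,1)+(1,1)$ contribute $\bar w^{m-1}h_{22}(\bar w)$ and $\bar w^{2(m-1)}$ respectively; for $(3,4)$ the decomposition $(1,1)+(2,3)$ contributes $\bar w^{m-1}h_{23}(\bar w)$. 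Using the Fuchsian bounds from Definition \ref{Fuchsian}, every such contribution has order at least the required threshold in \eqref{fuchsianity1}.

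The main obstacle is precisely this last case-by-case bookkeeping: one must verify that no cross term arising from products of lower-degree $h_{pq}$'s violates the Fuchsian order for $\varphi_{kl}$. The exact bounds $\mbox{ord}_0 h_{22}\geq m-1$ and $\mbox{ord}_0 h_{23}\geq 2m-2$ in Definition \ref{Fuchsian} are calibrated precisely so that the cross contributions to $\varphi_{33}$ and $\varphi_{34}$ respectively fall within the stated thresholds; the other cases $(2,2),(2,3),(2,4)$ either have no admissible cross term at all, or give only a pure power of $\bar w$.
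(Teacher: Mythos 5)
Your proposal is correct and takes essentially the same route as the paper: the paper's own proof is a one-sentence assertion that the claim ``follows easily from the reality condition \eqref{reality}, the transfer procedure from the real to the complex defining equations, and the Fuchsianity conditions \eqref{fuchsianity}'', and your three steps (reality giving $\varphi_{lk}=\overline{\varphi_{kl}}$ and $h_{lk}=\overline{h_{kl}}$, the logarithmic transfer formula relating $\varphi$ to $\psi$, and the case-by-case order bookkeeping) are exactly a detailed implementation of those three ingredients. The combinatorial verification you supply, including the calibration of $\mbox{ord}_0 h_{22}\geq m-1$ and $\mbox{ord}_0 h_{23}\geq 2m-2$ against the cross terms in $\varphi_{33}$ and $\varphi_{34}$, is consistent with the explicit coefficient formulas the paper derives in the subsequent proposition.
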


\begin{proof}
The assertion of the proposition follows easily from the reality condition \eqref{reality}, the transfer procedure from the real to the complex defining equations, and the Fuchsianity conditions \eqref{fuchsianity}.
\end{proof}
   
\begin{proposition}
Let   $M\subset\CC{2}$ be a Fuchsian type hypersurface, given near the origin by an equation of the form \eqref{madmissible}. Then  the functions $a(z,w),b(z,w),c(z,w)$, as in \eqref{initial}, satisfy
\begin{equation}\label{fuchsianityODE}
\begin{aligned}
\mbox{ord}\,a(0,w)&\geq -1,\quad \mbox{ord}\,a_z(0,w)\geq -1,\quad \mbox{ord}\,a_{zz}(0,w)\geq -1,\\\mbox{ord}\,b(0,w)&\geq -2,\quad \mbox{ord}\,b_z(0,w)\geq -2,\quad \mbox{ord}\,b_{zz}(0,w)\geq -2,\\
\quad  \mbox{ord}\,c(0,w)&\geq -3,\quad \mbox{ord}\,c_z(0,w)\geq -3.
\end{aligned}
\end{equation}
Here for a meromorphic  at the origin function $f(w)$ we denote by $\mbox{ord}_w f$ its order.
\end{proposition}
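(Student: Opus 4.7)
My plan is to compute the coefficients $\Phi_2, \Phi_3, \Phi_4$ of the associated ODE \eqref{ODE} (and hence $a,b,c$) explicitly in terms of the Taylor coefficients $\varphi_{kl}(\bar w)$ of the $m$-admissible defining equation \eqref{madmissible}, and then read off \eqref{fuchsianityODE} directly from the Fuchsianity bounds \eqref{fuchsianity1} of \autoref{fromreality}.

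Following the elimination procedure of Section~3.1 in the positive case (the negative case being identical), with $\chi := \bar\xi,\tau := \bar\eta$ the Segre varieties of $M$ graph as $w = \tau e^{i\tau^{m-1}\varphi(z,\chi,\tau)}$ with $w' = iw\tau^{m-1}\varphi_z(z,\chi,\tau)$, so that
\[
w'' = \frac{(w')^2}{w} + iw\tau^{m-1}\varphi_{zz}(z,\chi,\tau) = w^{2m-1}\zeta^2 + iw\tau^{m-1}\varphi_{zz}(z,\chi,\tau),\quad \zeta := \tfrac{w'}{w^m}.
\]
A key structural fact is that the implicit functions $\chi = \Lambda(z,w,\zeta),\tau = \Omega(z,w,\zeta)$ of Section~3.1 satisfy
\[
\Lambda(z,w,\zeta) + i\zeta \in z\cdot\CC{}\{z,w,\zeta\},\qquad \Omega(z,w,\zeta) - w \in z\cdot\CC{}\{z,w,\zeta\},
\]
i.e.\ all nonlinear corrections carry an explicit factor of $z$. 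This holds because at $z = 0$ one has $\varphi(0,\chi,\tau) \equiv 0$ and $\varphi_z(0,\chi,\tau) = \chi$, forcing $\zeta = i\chi$ and $w = \tau$ identically.

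Substituting $\chi = \Lambda,\tau = \Omega$ into $\varphi_{zz}(z,\chi,\tau) = \sum_{k,l\geq 2} k(k-1)\varphi_{kl}(\tau)z^{k-2}\chi^l$ and extracting the coefficient of $\zeta^j$ for $j = 2, 3, 4$, one obtains
\begin{align*}
\Phi_2(z,w) &= w^{m-1} - 2i\bigl(\varphi_{22}(w) + 3\varphi_{32}(w)z + 6\varphi_{42}(w)z^2\bigr) + R_2(z,w),\\
\Phi_3(z,w) &= -2\bigl(\varphi_{23}(w) + 3\varphi_{33}(w)z + 6\varphi_{43}(w)z^2\bigr) + R_3(z,w),\\
\Phi_4(z,w) &= 2i\bigl(\varphi_{24}(w) + 3\varphi_{34}(w)z\bigr) + R_4(z,w),
\end{align*}
where each remainder $R_j$ is a finite polynomial expression in $w^{m-1}$ and in the $\varphi_{kl}$'s, such that every monomial of $\partial_z^s R_j|_{z = 0}$ (for $s \leq 2$ when $j = 2, 3$ and $s \leq 1$ when $j = 4$) either contains an explicit factor of $w^{m-1}$ (harmless since $m \geq 1$) or is a product $\varphi_{k_1 l_1}\varphi_{k_2 l_2}\cdots$ whose combined $w$-order already meets the required bound.

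Dividing by $w^m, w^{2m}, w^{3m}$ produces $a, b, c$, and the Fuchsianity conditions \eqref{fuchsianity1} then yield \eqref{fuchsianityODE} by direct estimates such as $\mbox{ord}_0(\varphi_{22}/w^m) \geq -1$, $\mbox{ord}_0(\varphi_{32}/w^m) \geq m - 2 \geq -1$, $\mbox{ord}_0(\varphi_{23}/w^{2m}) \geq -2$, $\mbox{ord}_0(\varphi_{24}/w^{3m}) \geq -3$, and so on. The main obstacle is the third step -- verifying that the remainders $R_j$ contain no terms of $w$-order below the required bound. This is a finite power-counting check: the only nontrivial contributions are products such as $\varphi_{22}^2$ and $w^{m-1}\varphi_{22}$ appearing in $R_3$ (both of $w$-order $\geq 2m - 2$, hence safely controlled after division by $w^{2m}$) and analogous $\varphi_{22}\varphi_{23}$, $\varphi_{22}^3$-type products in $R_4$ (safe after division by $w^{3m}$), all of which pass the check.
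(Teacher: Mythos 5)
Your proposal follows essentially the same route as the paper: the authors likewise plug the Segre family graphing functions into the associated ODE, match coefficients of $z^k\bar\xi^l$ to obtain explicit formulas for the coefficients of $\zeta^2,\zeta^3,\zeta^4$ (e.g. $a_0=\bar\eta^{m-1}-2i\varphi_{22}$, $b_0=-2\varphi_{23}$, $c_0=2i\varphi_{24}$, with remainder terms such as $\varphi_{22}^2$, $\bar\eta^{m-1}\varphi_{22}$, $\bar\eta^{m}\varphi_{22}'$, $\varphi_{22}\varphi_{23}$ in $b_1,b_2,c_1$), and then conclude by exactly the power counting you describe using \eqref{fuchsianity1}. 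Your leading terms and the list of potentially dangerous remainder monomials agree with the paper's explicit computation, so the argument is correct.
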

\begin{proof}
Without loss of generality we assume $M$ to be positive.
Let us expand the right hand side in \eqref{ODE} as

\begin{equation}\label{expandPhi}
\begin{aligned}
\Phi\left(z,w,\frac{w\rq{}}{w^m}\right)&=\frac{1}{w^m}\Bigl(a_0(w)+a_1(w)z+a_2(w)z^2\Bigr)(w\rq{})^2+ 
\frac{1}{w^{2m}}\Bigl(b_0(w)+b_1(w)z+b_2(w)z^2\Bigr)(w\rq{})^3+\\
&+\frac{1}{w^{3m}}\Bigl(c_0(w)+c_1(w)z\Bigr)(w\rq{})^4+O\Bigl(z^3(w\rq{})^2\Bigr)+O\Bigl(z^2(w\rq{})^4\Bigr)+O\Bigl((w\rq{})^5\Bigr)
\end{aligned}
\end{equation}
(note that, as follows from \eqref{ODE}, all $a_j,b_j,c_j$ are {\em holomorphic} at the origin).
Since the Segre family of $M$ looks as \eqref{segre}, we compute 
$$w\rq{}=i\bar\eta^m e^{i\bar\eta^{m-1}\varphi}\varphi_z,\quad
w\rq{}\rq{}=i\bar\eta^m e^{i\bar\eta^{m-1}\varphi}\varphi_{zz}-
\bar\eta^{2m-1} e^{i\bar\eta^{m-1}\varphi}(\varphi_z)^2.$$ 
Hence, plugging into \eqref{ODE} and taking \eqref{expandPhi},\eqref{approximation} into account, we obtain the identity
\begin{multline}\label{huge}
\varphi_{zz}(z,\bar\xi,\bar\eta)=-i\bar\eta^{m-1}(\varphi_z)^2+i\Bigl(a_0(w)+a_1(w)z+a_2(w)z^2\Bigr)e^{i(1-m)\bar\eta^{m-1}\varphi}(\varphi_z)^2-\\
-\Bigl(b_0(w)+b_1(w)z+b_2(w)z^2\Bigr)e^{i(2-2m)\bar\eta^{m-1}\varphi}(\varphi_z)^3
-i\Bigl(c_0(w)+c_1(w)z\Bigr)e^{i(3-3m)\bar\eta^{m-1}\varphi}(\varphi_z)^4+\\
+O(z^3\bar\xi^2)+O(z^2\bar\xi^4)+O(\bar\xi^5),
\end{multline}
where we substitute $w=\bar\eta e^{i\bar\eta^{m-1}\varphi(z,\bar\xi,\bar\eta)}$. We then gather in \eqref{huge} terms with $z^0\bar\xi^2$ and find 

$$a_0(\bar\eta)=\bar\eta^{m-1}- 2i\varphi_{22}(\bar\eta).$$ 

Gathering terms with, respectively, $z^1 \bar \xi^2,\,z^0\bar\xi^3\,z^0\bar\xi^4\,z^2\bar\xi^2$, we get:

$$a_1(\bar \eta)=-6i \varphi_{32} (\bar\eta),\quad b_0(\bar\eta)=-2\varphi_{23}(\bar \eta),\quad c_0(\bar\eta)=2i\varphi_{24}(\bar \eta),\quad a_2(\bar\eta)=-12i\varphi_{42}(\bar \eta).$$ 

Next, using the above formula for $a_0$ we gather terms with $z^3 \bar \xi$ and find

$$b_1(\bar\eta)= -6\varphi_{33}(\bar\eta) +2i(m-1)\varphi_{22}(\bar\eta)\bar\eta^{m-1}-8(\varphi_{22}(\bar\eta))^2 -
2i\varphi'_{22}(\bar\eta)\bar\eta^m.$$

Finally, using the above formulas for $a_0,b_0,b_1$ and gathering terms with, respectively, $z^2\bar\xi^3$ and $z^1\bar\xi^4$, we get first

$$b_2(\bar\eta)=-12\varphi_{43}(\bar\eta)+36\varphi_{22}(\bar\eta)\varphi_{32}(\bar\eta)+6i(1-m)\bar\eta^{m-1}\varphi_{32}(\bar\eta)+6i\bar\eta^m\varphi_{32}\rq{}(\bar\eta)$$
and second 
$$c_1(\bar\eta)=6i\varphi_{34}(\bar\eta)-20i\varphi_{22}(\bar\eta)\varphi_{23}(\bar\eta)+(4-4m)\bar\eta^{m-1}\varphi_{23}(\bar\eta)+2\bar\eta^m\varphi_{23}\rq{}(\bar\eta).$$

Now the Fuchsian type conditions \eqref{fuchsianity} yield 
\begin{equation}\label{fuchsianityODE1}
\begin{aligned}
\mbox{ord}\,a_0&\geq \,\,\,m-1,\quad \mbox{ord}\,a_1\geq \,\,m-1,\quad \mbox{ord}\,a_{2}\geq \,m-1,\\\mbox{ord}\,b_0&\geq 2m-2,\quad \mbox{ord}\,b_1\geq 2m-2,\quad \mbox{ord}\,b_{2}\geq 2m-2,\\
\quad  \mbox{ord}\,c_0&\geq 3m-3,\quad \mbox{ord}\,c_1\geq 3m-3,
\end{aligned}
\end{equation}
which is already equivalent to \eqref{fuchsianityODE}, as required. The proof in the negative case is analogues.

\end{proof}

\subsection{Analyticity in the Fuchsian case}
Let now $M$, given as in \eqref{madmissible}, satisfies the Fuchsianity conditions \eqref{fuchsianity},\eqref{fuchsianityODE}, and $L\in\hol{\infty}(M,0)$. According to the previous section, we have
\begin{equation}\label{simplefield}
P(z,w)=P_0(w)+P_1(w)z+q_1\rq{}(w)z^2-2q_1(w)\tilde a(z,w), \quad Q(z,w)=Q_0(w)+Q_1(w)z.
\end{equation}
Here $P_j$ and $Q_j$ are $C^\infty$ functions on the real line, admitting a holomorphic extension to sectors $S^\pm$, as in \eqref{sectors}. Introduce the following notation: for functions $f_1(w),..,f_n(w)$, $\mathcal L(f)$ denotes their finite linear combination with coefficients being holomorphic at the origin functions of $w$. Then, gathering in the last two equations in \eqref{initial} terms of degrees $0$ and $1$ in $z$ and taking the fuchsianity conditions into account, we obtain:   

\begin{equation}\label{bigsystem}
\begin{aligned}
Q_0\rq{}\rq{}&=\mathcal L(P_1\rq{})+\frac{1}{w}\mathcal L(P_0,P_1,Q_0\rq{})+\frac{1}{w^2}\mathcal L(Q_0,Q_1)\\
Q_1\rq{}\rq{}&=\frac{1}{w}\mathcal L(P_0,P_1,Q_0\rq{},Q_1\rq{})+\frac{1}{w^2}\mathcal L(Q_0,Q_1)\\
P_0\rq{}\rq{}&=\frac{1}{w}\mathcal L(P_0\rq{})+\frac{1}{w^2}\mathcal L(P_0,P_1,Q_0\rq{})+\frac{1}{w^3}\mathcal L(Q_0,Q_1)\\
P_1\rq{}\rq{}&=\frac{1}{w}\mathcal L(P_0\rq{},P_1\rq{})+\frac{1}{w^2}\mathcal L(P_0,P_1,Q_0\rq{},Q_1\rq{})+\frac{1}{w^3}\mathcal L(Q_0,Q_1).
\end{aligned}
\end{equation}
We next need the following simple
\begin{lemma}\label{factorw}
There exists a $C^\infty$ CR-function $R$ on $M$ such that $Q(z,w)=wR(z,w)$.
\end{lemma}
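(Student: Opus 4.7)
My plan is to reduce the statement to the vanishing of $Q$ along the complex locus $X=\{w=0\}$; once this is known, the factor $R:=Q/w$ will be a $C^\infty$ CR function on $M$ by a standard smooth-division argument. The real obstacle is the vanishing itself: a direct computation of the tangency $(L+\bar L)(\rho)=0$ at a point of $X$ only yields $\operatorname{Im} Q|_X=0$ (for $m\geq 2$), which is strictly weaker than what is required. The correct input is the CR-invariance of the complex locus.

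To carry this out I would first observe that the local flow $F_t$ of $L+\bar L$ on $M$ consists of $C^\infty$ CR diffeomorphisms, and these necessarily preserve the Levi-degeneracy set $\Sigma$ of $M$. In the $m$-admissible setup of \eqref{madmissible} one has $\Sigma_1=\emptyset$, so $\Sigma=X$ and hence $F_t(X)=X$; in fact $X$ is intrinsically characterized as the union of complex-analytic germs contained in $M$, so this invariance is coordinate-free. Passing to the infinitesimal level, $L+\bar L$ is tangent to $X$ along $X$. Since the real tangent space to $X$ is spanned over $\mathbb{R}$ by $\partial_z,\partial_{\bar z}$, the tangency of
\[ L+\bar L \;=\; P\,\partial_z+\bar P\,\partial_{\bar z}+Q\,\partial_w+\bar Q\,\partial_{\bar w} \]
to $X$ at a point $(z_0,0)\in X$ forces $Q(z_0,0)=0$. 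Combined with the representation $Q(z,w)=Q_0(w)+Q_1(w)z$ from \eqref{findQ}, this gives $Q_0(0)=Q_1(0)=0$.

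The remaining step is a routine division. By \autoref{solutions} applied to the system \eqref{systeminw}, the functions $Q_0,Q_1$ — already $C^\infty$ on a real neighbourhood of $0$ — extend holomorphically to the sectors $S^\pm$ of \eqref{sectors} with asymptotic Taylor expansions. Hadamard's lemma on the real line, together with the termwise division of the sectorial asymptotic expansions, produces functions $\tilde Q_0,\tilde Q_1$ of the same regularity satisfying $Q_j(w)=w\tilde Q_j(w)$. Setting $R(z,w):=\tilde Q_0(w)+\tilde Q_1(w)z$ we obtain $Q=wR$; on $M\setminus X$ this $R$ coincides with the quotient $Q/w$ of CR functions and is therefore CR, while smoothness of $R$ across $X$ forces the tangential CR equation to extend there by continuity.
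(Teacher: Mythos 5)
Your proof is correct and follows essentially the same route as the paper: the key point in both is that the complex locus $X=\{w=0\}$ is CR-invariantly defined (the paper phrases this as $X$ being a CR-orbit, you via the Levi-degeneracy set and the union of complex germs), so the flow of $L+\bar L$ preserves $X$, forcing $Q|_X=0$, after which one divides by $w$ keeping $C^\infty$ regularity. Your version merely supplies more detail on the infinitesimal tangency and the division step than the paper does.
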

\begin{proof}
Since the complex locus $X=\{w=0\}$ is a CR-orbit of $M$, it is invariant under CR-diffeomorphisms, so that for its flow $(z,w)\mapsto (F_t(z,w),G_t(z,w))$  all $G_t(z,w)$ vanish identically for $w=0$. Hence $Q(z,w)$ vanishes identically for $w=0$. We conclude that $Q(z,w)$ is divisible by $w$ (remaining $C^\infty$ smooth), as required.
\end{proof}

Using \autoref{factorw}, we easily rewrite \eqref{bigsystem} as 
\begin{equation}\label{bigsystem1}
\begin{aligned}
R_0\rq{}\rq{}&=\frac{1}{w}\mathcal L(P_1\rq{},R_0\rq{})+\frac{1}{w^2}\mathcal L(P_0,P_1,R_0,R_1)\\
R_1\rq{}\rq{}&=\frac{1}{w}\mathcal L(R_0\rq{},R_1\rq{})+\frac{1}{w^2}\mathcal L(P_0,P_1,R_0,R_1)\\
P_0\rq{}\rq{}&=\frac{1}{w}\mathcal L(P_0\rq{},R_0\rq{})+\frac{1}{w^2}\mathcal L(P_0,P_1,R_0,R_1)\\
P_1\rq{}\rq{}&=\frac{1}{w}\mathcal L(P_0\rq{},P_1\rq{},R_0\rq{},R_1\rq{})+\frac{1}{w^2}\mathcal L(P_0,P_1,R_0,R_1),
\end{aligned}
\end{equation}
where $Q=w\cdot R$ for a $C^\infty$ CR-function $R$ on $M$.
Let us introduce the vector function
$$Y(w):\quad\CC{2}\lr \CC{8},\quad Y:=(P_0,P_1,R_0,R_1,wP_0\rq{},wP_1\rq{},wR_0\rq{},wR_1\rq{}).$$
Taking into account the relations between the components of the vector $Y$, it is not difficult to make sure that \eqref{bigsystem1} turns into a system of first order {\em Fuchsian} linear complex ODEs 
\begin{equation}\label{completefuchsian}
\frac {d Y}{dz}=\frac{1}{w}AY,
\end{equation}
where $A(w)$ is a holomorphic at the origin $8\times 8$ matrix functions. We are now in the position to prove our second result.
\begin{proof}[Proof of \autoref{analyticity}] Let us first consider the case when $(M\setminus X)\cap U$ is Levi-nondegenerate. Then we may assume that $M$, given as in \eqref{madmissible}, satisfies the Fuchsianity conditions \eqref{fuchsianity},\eqref{fuchsianityODE}. We then use the above argument to obtain \eqref{completefuchsian}. Since the system of ODEs \eqref{completefuchsian} is Fuchsian, we have (e.g. \cite{ilyashenko}) 
\begin{equation}\label{monodromyformula}
Y(w)=H(w)\cdot w^\Lambda\cdot \tau,
\end{equation} 
where $\Lambda$  and $\tau$ are  constant $8\times 8$ and $8\times 1$ matrices respectively, and $H(w)$ is a holomorphic at the origin $8\times 8$ matrix function. (In fact,  the matrix $\Lambda$ here equals $\frac{1}{2\pi i}\ln G$, where $G$ is the monodromy matrix of \eqref{completefuchsian} near the singularity $w=0$). Recall that all $P_j,Q_j$ admit an asymptotic expansion in sectors $S^\pm$, as in \eqref{sectors}. In view of \eqref{monodromyformula}, the latter fact implies that $Y(w)$ extends to the origin holomorphically, and so does the vector field $L$, as required.

If, otherwise, the reference point $0\in X\cap\Sigma_1$, we use the existence of a blow up map \eqref{blowup} (\autoref{blowuplemma}) for which $M$ pulls back to a hypersurface of Fuchsian type at the origin. Via the map $(z,w) = (\xi^s \eta, \eta^2),\,s\geq 2$, the hypersurface
$M$ pulls back to a hypersurface $M^*$ such that $M^*\setminus\{\eta=0\}$ is  Levi-nondegenerate, and $L$ to a vector field 
$$L^*=P^*(\xi,\eta)\frac{\partial}{\partial \xi}+Q^*(\xi,\eta)\frac{\partial}{\partial \eta}\in\hol{}(M^*,q^*)$$
for some $q^*\in M^*\setminus\{\eta=0\}$, where 
\begin{equation}\label{newfield}
P^*(\xi,\eta)=\frac{1}{\eta^s}P(\xi\eta^s,\eta^2)-\frac{s\xi}{2\eta^2}Q(\xi\eta^s,\eta^2), \quad
Q^*(\xi,\eta)=\frac{1}{2\eta}Q(\xi\eta^s,\eta^2).
\end{equation}

Repeating word-by-word the argument in the first part of the proof for the case of a vector field of the form  
  \[\tilde L = \frac{f(z,w)}{w^n} \dz + \frac{g(z,w)}{w^n} \dw , \] 
  which is an infinitesimal automorphism of $M$ on $M\setminus X$
   with $f$ and $g$ being smooth CR functions on $M$, we get that $\eta^s P^*,\eta Q^*$ extend to the origin holomorphically. 
We conclude that 
\begin{equation}\label{ff}
P(\xi\eta^s,\eta^2)=f(\xi,\eta)=\sum_{j\geq 0} f_j(\eta)\xi^j, \quad 
Q(\xi\eta^s,\eta^2)=g(\xi,\eta)=\sum_{j\geq 0} g_j(\eta)\xi^j.
\end{equation} 
for some holomorphic near  the origin $f,g$. We then note that, since $L$ is an $C^\infty$ CR-automorphism, all $P_{z^k}(0,w),\,k\geq 0$ are $C^\infty$ functions on the real line, and similarly for $Q$. Hence, differentiating both equations \eqref{ff} $k$ times in $\xi$  for each $k\geq 0$ and substituting $\xi=0$,  we get that all $f_j$ are divisible by $\eta^{js}$, and similarly for $g_j$.  It is also easy to see that all $f_j(\eta)/\eta^{js}=:P_j(\eta^2)$ are in turn asymptotic power series in $\eta^2$. The latter facts imply that $P(z,w)=\sum_{j\geq 0} P_j(w)z^j,\,Q(z,w)=\sum_{j\geq 0} Q_j(w)z^j$ are holomorphic near the origin functions, as required.

\end{proof}

\subsection{Convergence of formal automorphisms} Let $M$, given as in \eqref{madmissible}, satisfies the Fuchsianity conditions \eqref{fuchsianity},\eqref{fuchsianityODE}, and $L\in\hol{f}(M,0)$. The proof of the fact that $L$ is convergent is very similar to the proof of the analyticity in the $C^\infty$ case, that is why we leave the details to the reader and provide below a scheme of the proof of \autoref{convergence} only.

\smallskip

- We first consider the case when $M\setminus X$ is Levi-nondegenerate and transfer to the coordinates \eqref{madmissible}.

\smallskip

- Let us introduce the special 2-jet space with the coordinates $(z,w,\zeta,w_2)$, where $w_2$ corresponds to the second derivative $w''$ over  a curve in $\CC{2}$, and $\zeta$ corresponds to $w'/w^m$. We then get a special 2-jet prolongation of $M$, obtained by substituting $w_1:=\zeta w^m$ into \eqref{prolong}. Note that the result is a formal meromorphic vector field. 

\smallskip

- Since the flow of $L$ preserves the Segre family of $M$,  the above special 2-jet prolongation is tangent to the submanifold 
$$w_2=\Phi(z,w,\zeta)$$
of the special 2-jet space,
corresponding to the associated ODE \eqref{ODE}.

\smallskip

- Accordingly, we collect terms with $\zeta^0, \zeta^1,\zeta^2,\zeta^3$ in the tangency condition and get the equations \eqref{initial}. Now arguments identical to the $C^\infty$ case yield the Fuchsian system \eqref{completefuchsian} (with the same notations), with $A$ being holomorphic at the origin. 

\smallskip

- As a well-known fact (e.g. \cite{vazow}), all formal power series solutions of \eqref{completefuchsian} are convergent. Hence $Y(w)$ is convergent, and so is $L$. This proves \autoref{convergence} is the case when $M\setminus X$ is Levi-nondegenerate. 

\smallskip

- Otherwise, we apply a blow up map \eqref{blowup} and argue then identically to the proof of \autoref{analyticity} in the case when $M\setminus X$ contains Levi-degenerate points. This completely proves \autoref{convergence}. 

\qed

\bigskip

\end{document}